\titlespacing{\section}{0cm}{3.5pc}{1.5pc}
\def\@citex[#1]#2{\if@filesw\immediate\write\@auxout{\string\citation{#2}}\fi
  \def\@citea{}\@cite{\@for\@citeb:=#2\do
    {\@citea\def\@citea{\@citesep}\@ifundefined
       {b@\@citeb}{{\bf ?}\@warning
       {Citation `\@citeb' on page \thepage \space undefined}}%
{\csname b@\@citeb\endcsname}}}{#1}}
\def\@citesep{; }
\newtheoremstyle{Kang}{}{}{\itshape}{}{\bf}{}{.5em}{}
\theoremstyle{Kang}
\newtheorem{theorem}{Theorem}[section]
\newtheorem{lemma}[theorem]{Lemma}
\newtheoremstyle{Kremark}{}{}{}{}{\bf}{}{.5em}{}
\theoremstyle{Kremark}
\newtheorem*{remark}{Remark.}
\newtheorem{defn}[theorem]{Definition}
\newtheorem{example}[theorem]{Example}
\newtheorem{other}{}
\def\fn#1{\operatorname{#1}} 
\def\bm#1{\mathbbm{#1}}
\def\c#1{\mathcal{#1}}
\title{Invertible Lattices}
\author{Esther Beneish$^{(1)}$ and Ming-chang Kang$^{(2)}$ \\[3mm]
\begin{minipage}{16cm} \begin{description} \itemsep=-1pt
\item[] $^{(1)}$Department of Mathematics, University of
Wisconsin-Parkside, Kenosha, WI, USA, E-mail: beneish@uwp.edu
\item[] $^{(2)}$Department of Mathematics, National Taiwan
University, Taipei,\\ Taiwan, E-mail: kang@math.ntu.edu.tw \item[]
\end{description} \end{minipage}}
\date{}
\begin{document}

\maketitle

\footnote{\textit{\!\!\!$2010$ Mathematics Subject Classification}. 11R33, 20C10, 20J06, 14E08.}
\footnote{\textit{\!\!\!Keywords and phrases}.
Integral representations, algebraic tori, permutation lattices, normal domains, cohomological Mackey functors.}

\begin{abstract}
{\noindent\bf Abstract.} Theorem. Let $\pi$ be a finite group of
order $n$, $R$ be a Dedekind domain satisfying that (i)
$\fn{char}R=0$, (ii) every prime divisor of $n$ is not invertible
in $R$, and (iii) $p$ is unramified in $R$ for any prime divisor
$p$ of $n$. Then all the flabby (resp.\ coflabby) $R\pi$-lattices
are invertible if and only if all the Sylow subgroups of $\pi$ are
cyclic. The above theorem was proved by Endo and Miyata when
$R=\bm{Z}$ \cite[Theorem 1.5]{EM}. As applications of this
theorem, we give a short proof and a partial generalization of a
result of Torrecillas and Weigel \cite[Theorem A]{TW}, which was
proved using cohomological Mackey functors.

\end{abstract}

\newpage
\section{Introduction}

Let $\pi$ be a finite group, $R$ be a Dedekind domain (i.e.\ a
commutative noetherian integral domain which is integrally closed
with Krull dimension one). Denote by $R\pi$ the group ring of
$\pi$ over $R$. An $R\pi$-lattice $M$ is a finitely generated left
$R\pi$-module which is a torsion-free $R$-module when regarded as
an $R$-module \cite[page 524]{CR}. $R\pi$-lattices play an
important role in the modular representation theory of the group
$\pi$ \cite[Section 18]{CR}. They arose, when $R=\bm{Z}$, in the
study of Noether's problem and in the birational classification of
algebraic tori \cite{Sw1,EM,Vo,CTS}.

Before discussing the main results, we recall some definitions.

\begin{defn} \label{d1.1}
Let $M$ be an $R\pi$-lattice where $R$ is a Dedekind domain and
$\pi$ is a finite group. $M$ is called a permutation lattice if it
is an $R$-free $R\pi$-module with an $R$-free basis permuted by
$\pi$; explicitly, $M=\bigoplus_{1\le i\le m}R\cdot x_i$ and
$\sigma\cdot x_i=x_j$ for all $\sigma\in\pi$, for all $1\le i \le
m$ (note that $j$ depends on $\sigma$ and $i$). An $R\pi$-lattice
$M$ is called an invertible lattice if, as an $R\pi$-modules, it
is a direct summand of some permutation $R\pi$-lattice. An
$R\pi$-lattice $M$ is called a flabby (or flasque) lattice if
$H^{-1}(\pi',M)=0$ for all subgroups $\pi'$ of $\pi$ \cite[Section
8; CTS; Be, page 103]{Sw1} where $H^{-1}(\pi',M)$ denotes the Tate
cohomology \cite[page 102]{Be}. Similarly, $M$ is called a
coflabby (or coflasque) lattice if $H^1(\pi',M)=0$ for all
subgroups $\pi'$ of $\pi$. Clearly, ``permutation" $\Rightarrow$
``invertible" $\Rightarrow$ ``flabby" and ``coflabby" \cite[Lemma
8.4]{Sw1}.
\end{defn}

\begin{defn} \label{d1.2}
Let $p$ be a prime number and $R$ be a Dedekind domain with
$\fn{char}R=0$. We call $p$ is unramified in $R$ if $p$ is not
invertible in $R$ and the principal ideal $pR$ is an intersection
of some maximal ideals of $R$.
\end{defn}

In \cite{EM,CTS}, many interesting results about $\bm{Z}\pi$-lattices were obtained.
Here is one sample of them.

\begin{theorem}[Endo and Miyata {\cite[Theorem 1.5]{EM}}] \label{t1.3}
Let $\pi$ be a finite group,
$I_{\bm{Z}\pi}:=\fn{Ker}\{\varepsilon:\bm{Z}\pi\to \bm{Z}\}$ be
the augmentation ideal of $\bm{Z}\pi$,
$I_{\bm{Z}\pi}^0:=\fn{Hom}_{\bm{Z}}(I_{\bm{Z}\pi},\bm{Z})$ be the
dual $\bm{Z}\pi$-lattice of $I_{\bm{Z}\pi}$. Then the following
statements are equivalents,
\begin{enumerate}
\item[$(1)$] All the flabby (resp.\ coflabby) $\bm{Z}\pi$-lattices
are invertible; \item[$(2)$] $[I_{\bm{Z}\pi}^0]^{fl}$ is
invertible; \item[$(3)$] All the Sylow subgroups of $\pi$ are
cyclic.
\end{enumerate}
(The definition of $[M]^{fl}$ for an $R\pi$-lattice $M$ can be
found in Definition \ref{d2.2}.)

\end{theorem}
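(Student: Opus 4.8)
The plan is to prove the cycle $(1)\Rightarrow(2)\Rightarrow(3)\Rightarrow(1)$. The parenthetical ``resp.\ coflabby'' in $(1)$ causes no extra work: the $\bm Z$-dual $M\mapsto M^0=\fn{Hom}_{\bm Z}(M,\bm Z)$ sends permutation lattices to permutation lattices, hence preserves invertibility, and by Tate duality it interchanges the vanishing of $H^{-1}$ and of $H^1$ on every subgroup, so it carries flabby lattices to coflabby lattices and back. Thus the flabby and coflabby forms of $(1)$ are equivalent, and I only treat the flabby form. The implication $(1)\Rightarrow(2)$ is then immediate, since the class $[I_{\bm Z\pi}^0]^{fl}$ is by construction (Definition \ref{d2.2}) represented by a flabby lattice, so hypothesis $(1)$ applies to it directly.

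For $(3)\Rightarrow(1)$ I would argue by localization and reduction to Sylow subgroups. The key structural facts I would invoke (all standard in the theory of flabby classes recalled in \S2) are: flabbiness is preserved by $p$-localization $M\mapsto\bm Z_{(p)}\otimes_{\bm Z}M$ and by restriction to subgroups; and a flabby $\bm Z\pi$-lattice $M$ is invertible if and only if its restriction $\fn{Res}^\pi_{\pi_p}M$ is invertible for every Sylow $p$-subgroup $\pi_p$. Since $[\pi:\pi_p]$ is a unit of $\bm Z_{(p)}$, the restriction--induction (transfer) argument shows that every $\bm Z_{(p)}\pi$-lattice is a direct summand of the lattice induced up from its restriction to $\pi_p$; combined with the previous reduction, it therefore suffices to prove that every flabby $\bm Z_{(p)}\pi_p$-lattice is invertible when $\pi_p$ is cyclic. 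This last statement is handled by the integral representation theory of cyclic $p$-groups: over the local ring $\bm Z_{(p)}$ the lattices of a cyclic $p$-group are sufficiently well understood that flabbiness forces a lattice to be a summand of a permutation lattice. Re-gluing the local invertibility data over all primes $p$ then yields that $M$ itself is invertible, giving $(1)$.

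The substance of the theorem lies in $(2)\Rightarrow(3)$, which I would prove in contrapositive form: if some Sylow subgroup $\pi_p$ is non-cyclic, then $[I_{\bm Z\pi}^0]^{fl}$ is not invertible. The reduction to a $p$-group is clean. Restriction preserves invertibility (the restriction of a permutation lattice is again a permutation lattice) and commutes with the flabby-class construction (restrict a flabby resolution $0\to M\to P\to F\to 0$ to get a flabby resolution over the subgroup). Moreover, dualizing $0\to I_{\bm Z\pi}\to\bm Z\pi\to\bm Z\to0$ gives $0\to\bm Z\to\bm Z\pi\to I_{\bm Z\pi}^0\to0$, and restricting this to $\pi_p$ differs from the analogous sequence for $I_{\bm Z\pi_p}^0$ only by a free, hence permutation, $\bm Z\pi_p$-summand; consequently $\fn{Res}^\pi_{\pi_p}[I_{\bm Z\pi}^0]^{fl}=[I_{\bm Z\pi_p}^0]^{fl}$ in the flabby class monoid. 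Thus invertibility of $[I_{\bm Z\pi}^0]^{fl}$ would force invertibility of $[I_{\bm Z\pi_p}^0]^{fl}$, and by restricting once more I may assume $\pi_p$ is a minimal non-cyclic $p$-group, that is $C_p\times C_p$ or (when $p=2$) the quaternion group $Q_8$.

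The hard part, and the step I expect to be the main obstacle, is the final nonvanishing computation: one must exhibit an invariant that vanishes on invertible lattices but is nonzero on $[I_{\bm Z G}^0]^{fl}$ for $G=C_p\times C_p$ and $G=Q_8$. I would look for this invariant among the Tate cohomology groups of the flabby class, or in the projective class group, and verify that its value on $[I_{\bm Z G}^0]^{fl}$ is nontrivial precisely because $G$ is non-cyclic---intuitively, the failure of $I^0$ to be invertible reflects the genuinely nontrivial (``wild'') lattice theory of $C_p\times C_p$, where indecomposable flabby lattices that are not summands of permutation lattices first appear. Pinning down a computable invariant and evaluating it on the specific lattice $I_{\bm Z G}^0$ is the delicate point on which the whole equivalence turns; once it is in place, the contrapositive of $(2)\Rightarrow(3)$, and hence the full cycle, is complete.
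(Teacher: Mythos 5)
Your framework---the cycle $(1)\Rightarrow(2)\Rightarrow(3)\Rightarrow(1)$, the duality argument making the flabby and coflabby forms of $(1)$ equivalent, the Sylow reduction, and the identification $\fn{Res}^{\pi}_{\pi_p}[I^0_{\bm{Z}\pi}]^{fl}=[I^0_{\bm{Z}\pi_p}]^{fl}$---is sound, but both places where the theorem actually lives are left unproved. In $(3)\Rightarrow(1)$, after reducing to a cyclic $p$-group you invoke ``the integral representation theory of cyclic $p$-groups'' as the reason flabbiness forces a lattice to be a summand of a permutation lattice. No such classification exists to be quoted: $\bm{Z}_{(p)}C_{p^k}$ has infinite representation type once $k\ge 3$ (Heller--Reiner), so the lattices are emphatically not ``well understood,'' and this is precisely where the real work lies. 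The paper (the statement is the case $R=\bm{Z}$ of Theorem \ref{t1.4}) handles it by induction on $|\pi|$: for $M$ flabby over a cyclic $p$-group $\pi=\langle\sigma\rangle$ of order $n$, set $M'=\{u\in M:\Phi_n(\sigma)u=0\}$ and $M''=M/M'$; then $M'$ is a torsion-free module over $\bm{Z}\pi/\langle\Phi_n(\sigma)\rangle\simeq\bm{Z}[\zeta_n]$, a Dedekind domain, hence projective over it, so $[M']^{fl}$ is invertible (Lemma \ref{l4.1}); $M''$ is shown to be flabby over the quotient group by a Tate-cohomology argument, hence invertible by induction; then $[M]^{fl}$ is invertible by Lemma \ref{l2.3}(1), and since $M$ is also coflabby (periodicity of cohomology for cyclic groups) the flabby resolution with invertible cokernel splits, making $M$ itself invertible. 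Separately, your ``re-gluing of local invertibility data'' is true but not free: it needs an argument such as the characterization that $M$ is invertible iff $\fn{Ext}^1_{\bm{Z}\pi}(M,C)=0$ for every coflabby $C$, these Ext groups being finite and killed by $|\pi|$; the paper avoids localization here altogether, using only restriction to Sylow subgroups via Lemma \ref{l2.3}(2).

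The second gap is the more serious one: for $(2)\Rightarrow(3)$ you say you ``would look for'' an invariant that is nonzero on $[I^0_{\bm{Z}G}]^{fl}$ for $G=C_p\times C_p$ and $G=Q_8$, but you never produce one, so the implication you yourself identify as the substance of the theorem is not proved. Note that the paper's argument is direct, not contrapositive, and needs no reduction to minimal non-cyclic subgroups: from $0\to I_{\bm{Z}\pi}\to\bm{Z}\pi\to\bm{Z}\to 0$ one gets $H^1(\pi,I_{\bm{Z}\pi})\simeq\bm{Z}/n\bm{Z}$; dualizing a flabby resolution $0\to I^0_{\bm{Z}\pi}\to P\to E\to 0$ with $E$ invertible gives $0\to E^0\to P^0\to I_{\bm{Z}\pi}\to 0$, hence an embedding of $\bm{Z}/n\bm{Z}$ into $H^2(\pi,E^0)$ and then into $H^2(\pi,Q)\simeq\bigoplus_i H^2(\pi_i,\bm{Z})$ for a permutation lattice $Q=\bigoplus_i\bm{Z}\pi/\pi_i$; indecomposability of $\bm{Z}/p^t\bm{Z}$ places a copy of it inside some $H^2(\pi_i,\bm{Z})$, and Lemma \ref{l4.2} then produces an element of order $p^t$ (the full $p$-part of $n$) in $\pi_i\subset\pi$, i.e.\ the Sylow $p$-subgroup of $\pi$ is cyclic. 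If you want to keep your contrapositive plan, the computation for $C_p\times C_p$ and $Q_8$ must actually be carried out; as the proposal stands, neither of the two essential implications is established.
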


One of the main results of this paper is to generalize the above
theorem for $\bm{Z}\pi$-lattices to the case of $R\pi$-lattices
for some ``nice" Dedekind domain $R$. We remark that many results
for $\bm{Z}\pi$-lattices in \cite{EM,CTS} may be extended readily
to the category of $R\pi$-lattices where $R$ is a Dedekind domain
such that $\fn{char}R=0$ and every prime divisor of $|\pi|$ is not
invertible in $R$. However, in some situations, more delicate
conditions of $R$ are required. It is the case for the following
theorem.

\begin{theorem} \label{t1.4}
Let $\pi$ be a finite group of order $n$, $R$ be a Dedekind domain satisfying that {\rm (i)} $\fn{char}R=0$,
{\rm (ii)} every prime divisor of $n$ is not invertible in $R$,
and {\rm (iii)} $p$ is unramified in $R$ for any prime divisor $p$ of $n$.
Then the following statements are equivalent,
\begin{enumerate}
\item[$(1)$]
All the flabby (resp.\ coflabby) $R\pi$-lattices are invertible;
\item[$(2)$]
$[I_{R\pi}^0]^{fl}$ is invertible where $I_{R\pi}=\fn{Ker}\{\varepsilon:R\pi\to R\}$ is the augmentation ideal of $R\pi$,
and $I_{R\pi}^0=\fn{Hom}_R (I_{R\pi},R)$ is the dual lattice of $I_{R\pi}$;
\item[$(3)$]
All the Sylow subgroups of $\pi$ are cyclic.
\end{enumerate}

\end{theorem}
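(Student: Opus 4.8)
The argument will follow the skeleton of Theorem~\ref{t1.3}, the new ingredient being a descent from the known $\bm{Z}$-case along the unramified local extensions permitted by hypotheses (i)--(iii). First I would dispose of the formal parts. Dualization $M\mapsto M^0=\fn{Hom}_R(M,R)$ interchanges ``flabby'' and ``coflabby''---for every subgroup $\pi'$ the Tate groups $H^{-1}(\pi',M)$ and $H^{1}(\pi',M^0)$ vanish together---while preserving invertibility (the dual of a permutation lattice is again a permutation lattice). Hence the ``resp.\ coflabby'' form of (1) is equivalent to the flabby form, and it suffices to treat flabby lattices. The implication $(1)\Rightarrow(2)$ is then immediate, since $[I_{R\pi}^0]^{fl}$ is by construction (Definition~\ref{d2.2}) a flabby $R\pi$-lattice. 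The substance of the theorem is therefore carried by $(3)\Rightarrow(1)$ and $(2)\Rightarrow(3)$.

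The second, and decisive, preliminary is a local--global reduction. For any subgroup $\pi'$ the Tate groups $H^{\pm1}(\pi',M)$ are annihilated by $|\pi'|$ and so are supported at the rational primes $p\mid n$; likewise whether a flabby-and-coflabby lattice is a direct summand of a permutation lattice is governed by its completions $\widehat M_{\mathfrak p}$ at the maximal ideals $\mathfrak p$ lying over such $p$, together with a class-group contribution that I would show is controlled by the same primes. Condition (iii) is exactly what makes this tractable: since $p$ is unramified, $p$ remains a uniformizer of the complete discrete valuation ring $\widehat R_{\mathfrak p}$, so $\widehat R_{\mathfrak p}$ is unramified over $\bm{Z}_p$ with finite residue field $k\supseteq\bm{Z}/p$, and the chain $\bm{Z}\pi\to\bm{Z}_p\pi\to\widehat R_{\mathfrak p}\pi$ is a completion followed by a faithfully flat unramified base change. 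The whole strategy is to transport each assertion from $\bm{Z}$ (via Theorem~\ref{t1.3}) through this chain, the point being that enlarging the residue field from $\bm{Z}/p$ to $k$ alters none of the relevant vanishing or non-vanishing because the ramification data are unchanged.

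For $(3)\Rightarrow(1)$, assume every Sylow subgroup is cyclic and let $M$ be flabby. Since the cohomology of a cyclic group is $2$-periodic, $H^{-1}$ and $H^{1}$ coincide on each Sylow subgroup, and a restriction argument then shows $M$ is automatically coflabby as well. Next I would use that the invertibility of such a lattice is controlled, prime by prime, by its restrictions to Sylow subgroups---restriction preserving both permutation and invertible lattices---via a relative-projectivity argument, reducing the claim to a cyclic $p$-group $C$ and, after completing, to $\widehat R_{\mathfrak p}$. Over a complete discrete valuation ring the $\widehat R_{\mathfrak p}C$-lattices form a category of finite representation type with an explicit list of indecomposables; because $\widehat R_{\mathfrak p}$ is unramified over $\bm{Z}_p$ this list is the base change of the one over $\bm{Z}_p$, so Endo--Miyata's verification that flabby lattices over cyclic $p$-groups are invertible transports directly and yields $(1)$.

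The remaining implication $(2)\Rightarrow(3)$ is where I expect the real difficulty, and I would prove its contrapositive. If some Sylow $p$-subgroup is non-cyclic it contains a copy of $(\bm{Z}/p)^2$ or, when $p=2$, a generalized quaternion subgroup; restricting $[I_{R\pi}^0]^{fl}$ to such a subgroup $H$ yields, up to permutation summands, the class $[I_{RH}^0]^{fl}$, and restriction preserves invertibility, so it suffices to show that this class is \emph{not} invertible for $H=(\bm{Z}/p)^2$ and for $H$ generalized quaternion. Over $\bm{Z}$ this non-invertibility is the content of Theorem~\ref{t1.3} and is detected by a distinguished nonzero element of an appropriate Tate-cohomology or flabby class group. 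The main obstacle is to check that this element survives the passage to $\widehat R_{\mathfrak p}$: I must identify the obstruction with the value of a functor whose formation commutes with the unramified base change $\bm{Z}_p\to\widehat R_{\mathfrak p}$, so that the enlargement of the residue field cannot annihilate it. This is precisely the step that fails when $p$ is ramified---there the uniformizer would no longer be $p$ and the obstruction could collapse---and it is the reason hypothesis (iii) is imposed; making this survival statement precise is the crux of the whole proof.
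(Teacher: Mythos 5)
Your opening reductions (duality exchanging flabby and coflabby, the Sylow restriction via Lemma \ref{l2.3}(2), the triviality of $(1)\Rightarrow(2)$) agree with the paper, but the engine you propose --- descent of Theorem \ref{t1.3} along $\bm{Z}\pi \to \bm{Z}_p\pi \to \widehat{R}_{\mathfrak p}\pi$ --- has three gaps, each fatal as stated. First, the ``local--global reduction'' on which everything rests (that invertibility of an $R\pi$-lattice is governed by its completions at primes over $p\mid n$, plus a class-group term) is asserted, never proved; being a direct summand of a permutation lattice is not a priori a local condition, and controlling exactly this genus-theoretic obstruction is the hard part of the subject. Second, even after completing you cannot ``transport'' Endo--Miyata: a flabby $\widehat{R}_{\mathfrak p}\pi$-lattice need not be extended from any $\bm{Z}_p\pi$-lattice, so Theorem \ref{t1.3} says nothing about it until one re-proves the statement over the larger ring --- which is the theorem being proved. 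Third, the finite-representation-type claim is false: for a cyclic $p$-group $C$ of order $p^3$ or more, $\bm{Z}_pC$ has infinitely many indecomposable lattices (finite lattice type holds only through order $p^2$), so there is no ``explicit list of indecomposables'' to base-change. Finally, for $(2)\Rightarrow(3)$ you concede outright that the key step (survival of the non-invertibility obstruction under unramified base change) is unproved, so that direction is a program, not a proof.

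The paper's actual route avoids all of this and is worth contrasting. For $(3)\Rightarrow(1)$ it works directly over $R$: Theorem \ref{t3.3} (proved via Nagata's criteria for tensor products of normal domains --- this is where hypothesis (iii) enters) shows $R\pi/\langle\Phi_n(\sigma)\rangle\simeq R[\zeta_n]$ is a Dedekind domain; Lemma \ref{l4.1} then makes $[M']^{fl}$ invertible for the sublattice $M'=\{u\in M:\Phi_n(\sigma)\cdot u=0\}$, because $M'$ is projective over that Dedekind domain, and induction on $|\pi|$ applied to $M''=M/M'$ together with the splitting Lemma \ref{l2.3}(3) finishes. For $(2)\Rightarrow(3)$ no contrapositive and no restriction to $(\bm{Z}/p)^2$ or quaternion subgroups is needed: dualizing a flabby resolution of $I_{R\pi}^0$ gives an embedding of $R/nR$ into $H^2(\pi,Q)\simeq\bigoplus_i H^2(\pi_i,R)$ for a permutation lattice $Q$; localizing at a prime over $p$ (unramifiedness makes the localization a DVR with maximal ideal generated by $p$) and using that $R/p^tR$ is then an indecomposable $R$-module, Lemma \ref{l4.2} produces an element of order $p^t$, so the $p$-Sylow subgroup is cyclic. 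Note also that you locate the essential use of (iii) in the wrong place: Example \ref{ex4.3} (with $R=\bm{Z}[\zeta_p]$ and $\pi=C_p$) shows it is $(3)\Rightarrow(1)$ that collapses when $p$ ramifies --- there a flabby non-invertible lattice exists even though $\pi$ is cyclic --- i.e.\ (iii) is what keeps $R[\zeta_n]$ integrally closed in the core induction, not merely what preserves an obstruction in $(2)\Rightarrow(3)$.
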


Besides the standard method in \cite{EM,Sw2}, the crux of the
proof of the above theorem is Theorem \ref{t3.3} which provides a
sufficient condition to ensure $R_1 \otimes_{R_0} R_2$ is a normal
domain when $R_0, R_1, R_2$ are normal domains.

The above theorem will break down if the third assumption about
the unramifiedness is waived. We thank Prof. Shizuo Endo who
provides such an example (see Example \ref{ex4.3}).

We give two applications of Theorem \ref{t1.4}. The first
application is a short proof of the following theorem.

\begin{theorem}[Torrecillas and Weigel {\cite[Theorem A and Corollary 6.7]{TW}}] \label{t1.5}
Let $\pi$ be a cyclic $p$-group and $R$ be a DVR such that $\fn{char} R=0$ and $pR$ is the maximal ideal of $R$.
Let $M$ be an $R\pi$-lattice.
Then the following statements are equivalent,
\begin{enumerate}
\item[$(1)$]
$M$ is a permutation $R\pi$-lattice,
\item[$(2)$]
$M$ is a coflabby $R\pi$-lattice,
\item[$(3)$]
$M$ is a flabby $R\pi$-lattice.
\end{enumerate}
\end{theorem}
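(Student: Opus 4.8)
The plan is to read off Theorem \ref{t1.5} from Theorem \ref{t1.4}, the only genuinely new input being the passage from ``invertible'' to ``permutation''. First I would verify that $R$ meets the hypotheses of Theorem \ref{t1.4} for the group $\pi$. Since $\pi$ is a cyclic $p$-group, $n=|\pi|$ is a power of $p$ and $p$ is its sole prime divisor; (i) is assumed, (ii) holds because the maximal ideal $pR$ is proper so $p$ is a non-unit, and (iii) holds because a DVR has the single maximal ideal $pR$, which is trivially an intersection of maximal ideals. A cyclic group has only cyclic Sylow subgroups, so statement (3) of Theorem \ref{t1.4} is satisfied, whence statement (1) holds as well: every flabby and every coflabby $R\pi$-lattice is invertible. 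Together with the implications ``invertible'' $\Rightarrow$ ``flabby'' and ``invertible'' $\Rightarrow$ ``coflabby'' of Definition \ref{d1.1}, this shows that for $R\pi$-lattices the three notions flabby, coflabby and invertible coincide. Hence (2) and (3) of Theorem \ref{t1.5} are each equivalent to invertibility, and since a permutation lattice is invertible, the whole theorem collapses to the single implication that every invertible $R\pi$-lattice is a permutation lattice.

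To attack this implication, write $\pi=\langle\sigma\rangle$ with $|\pi|=p^{m}$ and let $H_i=\langle\sigma^{p^{i}}\rangle$, so that the transitive permutation lattices are exactly $R[\pi/H_i]\cong R[C_{p^{i}}]$ for $0\le i\le m$ and every permutation lattice is a direct sum of these. I would pass to the completion $\hat R$, where the Krull--Schmidt--Azumaya theorem holds for $\hat R\pi$-lattices. Because the residue field $R/pR$ has characteristic $p$, reduction modulo $p$ gives $(R/pR)[\pi/H_i]=(R/pR)[x]/\big((x-1)^{p^{i}}\big)$, a local ring; hence the (commutative) endomorphism ring of $\hat R[\pi/H_i]$ has no nontrivial idempotents, so it is local and each $\hat R[\pi/H_i]$ is indecomposable. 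Moreover these $m+1$ lattices are pairwise non-isomorphic, having distinct $R$-ranks $p^{i}$. If $M$ is invertible it is a direct summand of some permutation lattice $P=\bigoplus_i R[\pi/H_i]^{a_i}$, so $\hat M:=M\otimes_R\hat R$ is a summand of $\hat P$; Krull--Schmidt then forces $\hat M\cong\bigoplus_i \hat R[\pi/H_i]^{b_i}$ with $0\le b_i\le a_i$. In other words $\hat M$ is a permutation $\hat R\pi$-lattice, isomorphic to $\hat N$ where $N:=\bigoplus_i R[\pi/H_i]^{b_i}$.

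It remains to descend this isomorphism to $R$, that is, to show that the lattices $M$ and $N$, which lie in the same genus since they agree over the single completion $\hat R$, are already isomorphic over $R$. This is where the hypothesis that $pR$ is the maximal ideal becomes essential. Writing $K=\fn{Frac}R$, the semisimple algebra $K\pi$ splits as a product of cyclotomic fields $K(\zeta_{p^{i}})$, and since $R/pR$ has characteristic $p$ the prime $pR$ is totally ramified in each of them, so the integral closure $\c O_i$ of $R$ in $K(\zeta_{p^{i}})$ has a unique prime above $pR$ and is again a DVR, hence a PID with $\fn{Cl}(\c O_i)=0$. The mechanism I would use is that this vanishing of the cyclotomic class groups collapses the genus of $N$ to a single isomorphism class, forcing $M\cong N$, so that $M$ is a permutation lattice and the proof is complete. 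The main obstacle is precisely this last descent step: over a non-complete DVR the genus of a lattice can split into several isomorphism classes, the discrepancy being controlled by the ideal class groups of the simple components of $K\pi$, and one must check that the vanishing $\fn{Cl}(\c O_i)=0$ for every $i$ — which is exactly what the assumption that $pR$ is maximal guarantees, via the total ramification of $p$ in the cyclotomic fields — reduces the relevant class number to $1$. Establishing this reduction, rather than the formal manipulations above, is the crux of the argument, and it is the point that would break down over a Dedekind domain in which the primes over $p$ carry nontrivial cyclotomic class groups.
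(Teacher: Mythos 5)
Your first two steps coincide with the paper's own route: the reduction via Theorem \ref{t1.4} to the single implication ``invertible $\Rightarrow$ permutation'' is exactly the paper's Theorem \ref{1.6}, and your completion-plus-Krull--Schmidt argument producing a permutation lattice $N$ with $\widehat{R}M\simeq \widehat{R}N$ is the paper's proof of Theorem \ref{t1.7} (where you argue indecomposability via local endomorphism rings, the paper quotes \cite[page 678, Theorem 32.14]{CR}; your version is essentially fine, modulo the loose step ``idempotent-free $\Rightarrow$ local'', which does hold here by completeness). The genuine gap is the step you yourself call ``the main obstacle'': the descent from $\widehat{R}M\simeq\widehat{R}N$ to $M\simeq N$. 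You do not prove it, and the mechanism you propose rests on a false premise. For lattices over an $R$-order in a semisimple algebra over a field of characteristic zero, with $R$ a DVR, the genus does \emph{not} split into several isomorphism classes: $\widehat{R}M\simeq\widehat{R}N$ implies $M\simeq N$ unconditionally. This is \cite[page 627, Proposition (30.17)]{CR}, which is precisely the citation the paper uses to finish; it needs no class groups, no total ramification, and not even the hypothesis that $pR$ is maximal --- indeed the paper's Theorem \ref{t1.7} is stated for an arbitrary DVR of characteristic zero in which $p$ is not invertible. Genus splitting measured by ideal class groups is a phenomenon over non-semilocal Dedekind domains such as $\bm{Z}$, not over a DVR.

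Two further points of substance. First, even inside your proposed mechanism the reasoning misfires: $\fn{cl}$ass-group vanishing $C(\c{O}_i)=0$ has nothing to do with total ramification of $p$, since the integral closure of a DVR in any finite extension of $K$ is a semilocal Dedekind domain and hence a PID; and, more seriously, you would still need a precise theorem relating the isomorphism classes in the genus of an arbitrary (not locally free) $R\pi$-lattice to these class groups --- that is real content (Jacobinski-style genus theory), which you neither state nor prove. Second, your diagnosis of where the hypothesis ``$pR$ maximal'' is essential is misplaced: it is needed so that $p$ is unramified in $R$ in the sense of Definition \ref{d1.2}, i.e.\ for hypothesis (iii) of Theorem \ref{t1.4}, hence for the equivalence of flabby/coflabby with invertible (Example \ref{ex4.3} shows that equivalence fails when $p$ ramifies); the descent step itself needs only that $R$ is a DVR. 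To complete your proof, replace the class-group discussion by the citation of \cite[Proposition (30.17)]{CR}, or by its standard proof: since $\fn{Hom}_{R\pi}(M,N)\otimes_R\widehat{R}\simeq \fn{Hom}_{\widehat{R}\pi}(\widehat{R}M,\widehat{R}N)$, one can choose $f\in\fn{Hom}_{R\pi}(M,N)$ congruent modulo $p$ to an isomorphism; Nakayama makes $\widehat{f}$ surjective, hence bijective by rank count, and faithful flatness of $R\to\widehat{R}$ then gives $M\simeq N$.
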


The second application of Theorem \ref{t1.4} is to determine
$F_{R\pi}$ when $\pi$ is a cyclic group (for $F_{R\pi}$, see
Definition \ref{d2.1}). Consequently, a partial generalization of
Theorem \ref{t1.5} is obtained if $\pi$ is a cyclic group and $R$
is some semilocal ``nice" Dedekind domain (see Theorem
\ref{t5.4}).

We indicate briefly how to deduce Theorem \ref{t1.5} from Theorem
\ref{t1.4}. First rewrite Theorem \ref{t1.5} as follows.

\begin{theorem} \label{1.6}
Let $\pi$, $R$ be the same as in Theorem \ref{t1.5}. Then $(1)$,
$(2)$, $(3)$ and $(4)$ are equivalent where $(4)$ is
\begin{enumerate}
\item[$(4)$]
$M$ is an invertible $R\pi$-lattice.
\end{enumerate}
\end{theorem}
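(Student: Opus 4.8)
The plan is to reduce the four-way equivalence to the single nontrivial implication (4)$\Rightarrow$(1), and then to prove that implication by a Krull--Schmidt--Azumaya argument. First I would dispose of the easy directions: Definition \ref{d1.1} records the chain ``permutation $\Rightarrow$ invertible $\Rightarrow$ flabby and coflabby'', which gives (1)$\Rightarrow$(4)$\Rightarrow$(2) and (4)$\Rightarrow$(3) at once. For the converses (2)$\Rightarrow$(4) and (3)$\Rightarrow$(4) I would invoke Theorem \ref{t1.4}. One checks its hypotheses for the present $\pi$ and $R$: a DVR is a Dedekind domain of characteristic $0$; the only prime divisor of $n=|\pi|$ is $p$, which is not invertible because $pR$ is the maximal ideal; and $p$ is unramified in the sense of Definition \ref{d1.2} since $pR$, being maximal, is trivially an intersection of maximal ideals. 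A cyclic $p$-group has all Sylow subgroups cyclic, so statement (3) of Theorem \ref{t1.4} holds, whence statement (1) of that theorem holds, i.e.\ every flabby (resp.\ coflabby) $R\pi$-lattice is invertible. This yields (2)$\Rightarrow$(4) and (3)$\Rightarrow$(4), so (2), (3), (4) are already equivalent and (1)$\Rightarrow$(4); everything now rests on (4)$\Rightarrow$(1).

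To prove (4)$\Rightarrow$(1), let $M$ be invertible, so that $M$ is a direct summand of a permutation lattice $P$. Since $\pi$ is cyclic of order $p^s$, its subgroups form a chain $1=H_0\subset H_1\subset\cdots\subset H_s=\pi$, and hence $P\cong\bigoplus_{i=0}^{s} R[\pi/H_i]^{a_i}$ for suitable multiplicities $a_i$. The decisive point is that each transitive permutation module $R[\pi/H_i]$ has a local endomorphism ring. Indeed, $\pi$ is abelian, so $H_i$ acts trivially on $\pi/H_i$ and the $R\pi$-action factors through the quotient group ring; thus $R[\pi/H_i]$ is the regular module of that quotient, and $\fn{End}_{R\pi}(R[\pi/H_i])\cong R[\pi/H_i]$ as the latter is free of rank one over itself and commutative. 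Now $R[\pi/H_i]$ is a module-finite $R$-algebra whose reduction modulo the maximal ideal $pR$ is $k[\pi/H_i]$, where $k=R/pR$ has characteristic $p$; since $\pi/H_i$ is a $p$-group, $k[\pi/H_i]$ is a local Artinian ring. Every maximal ideal of the module-finite algebra $R[\pi/H_i]$ lies over $pR$, so $R[\pi/H_i]$ has a unique maximal ideal and is therefore local; in particular each $R[\pi/H_i]$ is indecomposable.

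With the local endomorphism rings in hand, I would apply the Krull--Schmidt--Azumaya theorem: a direct-sum decomposition into modules with local endomorphism rings complements direct summands. Applied to $P\cong\bigoplus_i R[\pi/H_i]^{a_i}$ and the summand $M$, this gives $M\cong\bigoplus_i R[\pi/H_i]^{b_i}$ for some $0\le b_i\le a_i$, so $M$ is a permutation lattice, proving (4)$\Rightarrow$(1) and completing the cycle.

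I would expect the locality of the endomorphism rings to be the crux, and it is precisely here that the hypothesis ``$pR$ is the maximal ideal'' is indispensable: it forces the residue characteristic to equal $p$, so that the reductions $k[\pi/H_i]$ are local group algebras of $p$-groups. Were the residue characteristic different from $p$, these group algebras would be semisimple and the modules $R[\pi/H_i]$ would split further, so an invertible lattice need no longer be a permutation lattice; thus this step explains both why the theorem is true and why its hypothesis cannot be relaxed. Notably, this route avoids any passage to the completion of $R$, since the locality of $R[\pi/H_i]$---and hence the applicability of Azumaya's theorem---holds over the possibly non-complete DVR $R$ directly.
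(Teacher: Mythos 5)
Your proof is correct, but the decisive implication $(4)\Rightarrow(1)$ is handled by a genuinely different argument from the paper's. The easy implications and the reduction of $(2)\Leftrightarrow(4)$ and $(3)\Leftrightarrow(4)$ to Theorem \ref{t1.4}, including the verification of its hypotheses for a DVR with maximal ideal $pR$, match the paper exactly. For $(1)\Leftrightarrow(4)$, however, the paper quotes Theorem \ref{t1.7} (Beneish \cite{Be}), whose proof passes to the completion $\widehat{R}$: there $\widehat{R}\pi/\pi'$ is indecomposable by \cite[Theorem 32.14]{CR}, Krull--Schmidt--Azumaya holds for $\widehat{R}\pi$-lattices, and the conclusion descends to $R$ via \cite[Proposition (30.17)]{CR}. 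You avoid completion entirely by verifying, for the specific decomposition $P\cong\bigoplus_i R[\pi/H_i]^{a_i}$, that each summand has a commutative local endomorphism ring ($\fn{End}_{R\pi}(R[\pi/H_i])\cong R[\pi/H_i]$, which is local because it is integral over the DVR $R$ and its reduction $k[\pi/H_i]$ is a local ring in characteristic $p$), so that Azumaya's theorem applies over $R$ itself. This is legitimate: Azumaya's theorem is a statement about decompositions whose summands have local endomorphism rings, and the well-known failure of Krull--Schmidt for lattices over non-complete DVRs concerns lattices lacking that property. What each approach buys: yours is more elementary and self-contained for the case at hand, but it leans on $\pi$ being abelian (so that $\fn{End}_{R\pi}(R[\pi/H])$ is the group ring of the quotient group; for a non-abelian $p$-group this endomorphism ring is a Hecke-type algebra and locality would need a separate argument), whereas the paper's Theorem \ref{t1.7} covers all $p$-groups and all DVRs in which $p$ is merely non-invertible, which is the generality in which Beneish's result is stated and used. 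Two minor points: to invoke ``complements direct summands'' cleanly you should note that $M$, being noetherian, decomposes into finitely many indecomposables, after which the finite Azumaya/exchange argument yields $M\cong\bigoplus_i R[\pi/H_i]^{b_i}$; and your closing remark overstates the role of the hypothesis ``$pR$ is maximal'' in this step --- residue characteristic $p$ already follows from $p$ being non-invertible, and the full unramifiedness hypothesis is what Theorem \ref{t1.4} needs (cf.\ Example \ref{ex4.3}), not the Azumaya step.
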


In fact, $(2)\Leftrightarrow (4)$ (resp.\ $(3)\Leftrightarrow
(4)$) follows from Theorem \ref{t1.4}. As to $(1)\Leftrightarrow
(4)$, it follows from the following theorem in \cite{Be}.

\begin{theorem}[Beneish {\cite[Theorem 2.1]{Be}}] \label{t1.7}
Let $\pi$ be a $p$-group and $R$ be a DVR such that $\fn{char} R=0$ and $p$ is not invertible in $R$.
If $M$ is an invertible $R\pi$-lattice,
then it is a permutation $R\pi$-lattice.
\end{theorem}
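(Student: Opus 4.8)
The plan is to pass to the $\mathfrak m$-adic completion $\widehat R$ of the DVR $R$, where the Krull--Schmidt theorem is available, read off the isomorphism type of $\widehat R\otimes_R M$ there, and then descend the resulting isomorphism back to $R$. First I would record how the hypotheses enter: since $\fn{char}R=0$ and the rational prime $p$ is not invertible in the DVR $R$, it lies in the maximal ideal $\mathfrak m$, so the residue field $k=R/\mathfrak m$ has characteristic $p$. Write $\widehat R$ for the completion, a complete DVR with the same residue field $k$, and set $\widehat M:=\widehat R\otimes_R M$. Because $M$ is a direct summand of some permutation $R\pi$-lattice $P$ and $\widehat R\otimes_R R[\pi/\pi']=\widehat R[\pi/\pi']$, the lattice $\widehat M$ is a direct summand of the permutation $\widehat R\pi$-lattice $\widehat R\otimes_R P$; that is, $\widehat M$ is again invertible over $\widehat R\pi$.

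Next I would determine the indecomposable permutation $\widehat R\pi$-lattices, the claim being that each transitive one $\widehat R[\pi/\pi']$ (for $\pi'\le\pi$) is indecomposable. Reducing modulo $\mathfrak m$ gives $k[\pi/\pi']$, whose socle equals its $\pi$-fixed points $k\cdot\big(\sum_{gH\in\pi/\pi'}gH\big)$, which is one-dimensional: since $\pi$ is a $p$-group and $\fn{char}k=p$, the ring $k\pi$ is local and the trivial module is its only simple module, so $\fn{soc}(k[\pi/\pi'])=k[\pi/\pi']^{\pi}$ and $\pi$ acts transitively on the cosets. A module with simple socle is indecomposable, so $k[\pi/\pi']$ is indecomposable; consequently $\widehat R[\pi/\pi']$ is indecomposable, because its endomorphism ring is module-finite over the complete local ring $\widehat R$ and hence has no nontrivial idempotents once its reduction inside the local ring $\fn{End}_{k\pi}(k[\pi/\pi'])$ has none (idempotent lifting). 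As $\widehat R$ is a complete DVR, Krull--Schmidt holds for $\widehat R\pi$-lattices, so the indecomposable summands of $\widehat M$ occur among those of $\widehat R\otimes_R P$ and are therefore of the form $\widehat R[\pi/\pi']$. Thus $\widehat M\cong\bigoplus_j\widehat R[\pi/\pi_j']$ for suitable $\pi_j'\le\pi$; setting $P_0:=\bigoplus_j R[\pi/\pi_j']$, a genuine permutation $R\pi$-lattice, we get $\widehat M\cong\widehat R\otimes_R P_0$.

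It remains to descend this isomorphism, i.e.\ to deduce $M\cong P_0$ over $R\pi$ from $\widehat M\cong\widehat R\otimes_R P_0$ over $\widehat R\pi$, and this is the step I expect to be the main obstacle, since Krull--Schmidt fails over the non-complete ring $R\pi$. Here I would use that $H:=\fn{Hom}_{R\pi}(M,P_0)$ is a finitely generated $R$-module with $H\otimes_R\widehat R\cong\fn{Hom}_{\widehat R\pi}(\widehat M,\widehat R\otimes_R P_0)$ by flat base change, so $H$ is dense in the latter for the $\mathfrak m$-adic topology. Choosing $\varphi\in H$ congruent modulo $\mathfrak m$ to a fixed isomorphism $\widehat M\to\widehat R\otimes_R P_0$, the reduction $\bar\varphi\colon M/\mathfrak mM\to P_0/\mathfrak mP_0$ is an isomorphism of $k$-vector spaces; Nakayama's lemma then makes $\varphi$ surjective, and since $M$ and $P_0$ are free $R$-modules of equal rank (equal because $\widehat M\cong\widehat R\otimes_R P_0$ forces equal $\widehat R$-ranks), the surjection $\varphi$ has torsion-free kernel of rank zero and is therefore an isomorphism. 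This yields $M\cong P_0$, a permutation lattice. The delicate points are precisely this approximation/Nakayama argument, where the DVR hypothesis (a single prime, together with the density of $R$ in $\widehat R$) is essential, and the verification that reduction modulo $\mathfrak m$ detects indecomposability of the permutation lattices.
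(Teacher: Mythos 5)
Your proposal is correct and follows essentially the same route as the paper: pass to the completion $\widehat R$, use indecomposability of the transitive permutation lattices $\widehat R[\pi/\pi']$ together with the Krull--Schmidt--Azumaya theorem to identify $\widehat R\otimes_R M$ with a completed permutation lattice, and then descend the isomorphism to $R$. The only difference is that where the paper cites Curtis--Reiner (Theorem 32.14 for indecomposability, Theorem 6.12 for Krull--Schmidt, Proposition 30.17 for descent), you prove those ingredients directly via the socle argument over $k\pi$ and the approximation-plus-Nakayama argument, both of which are sound.
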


Note that Theorem \ref{t1.7} is implicit in the proof of \cite[Theorem 3.2]{EM}.

We remark that \cite[Theorem C]{TW} follows also from Theorem
\ref{t1.4}, because we may take a flabby resolution $0 \to M^0 \to
P \to E \to 0$ (see Definition \ref{d2.2}) and apply Theorem
\ref{t1.4} to $E$. Then take the dual of this exact sequence.

This paper is organized as follows. In Section 2 we recall the
definition of flabby resolutions and flabby class monoids. In
Section 3 we prove that $R[X]/\langle \Phi_n(X)\rangle$ is a
Dedekind domain when $R$ is a ``nice" Dedekind domain. The proof
of Theorem \ref{t1.4} is provided in Section 4 following that of
\cite{EM,Sw2}. Section 5 contains a computation of the flabby
class group $F_{R\pi}$ when $\pi$ is a cyclic group, which
generalizes some part of a theorem of Endo and Miyata
\cite[Theorem 3.3; Sw2, Theorem 2.10]{EM}. Then a partial
generalization of Theorem \ref{t1.5} is given; see Theorem
\ref{t5.4}.

Terminology and notations. A commutative noetherian integral
domain $R$ is called a DVR if it is a discrete rank-one valuation
ring. We denote by $R[X]$ the polynomial ring of one variable over
$R$. $\Phi_m(X)$ denotes the $m$-th cyclotomic polynomial, and
$\zeta_n$ denotes a primitive $n$-th root of unity. We denote by
$R\pi$ the group ring of the finite group $\pi$ over the ring $R$.
If $M$ is an $R\pi$-lattice, then $M^0$ denotes its dual lattice,
i.e. $M^0=Hom_R(M,R)$; note that there is a natural action of
$\pi$ on $M^0$ from the left \cite[page 31]{Sw1}. For emphasis, we
remind the reader that the definition that $p$ is unramified in a
Dedekind domain $R$ is given in Definition \ref{d1.2}.

\section{Preliminaries}

From now on till the end of this paper, when we talk about the
group ring $R\pi$, we always assume that $\pi$ is a finite group
of order $n$.

Let $M$ be an $R\pi$-module. The cohomology groups $H^q(G,M)$ and
the homology groups $H_q(G,M)$ can be defined via the derived
functors $\fn{Ext}_{R\pi}^q (R,M)$ and $\fn{Tor}_q^{R\pi} (R,M)$;
the Tate cohomology groups may be defined by the usual way
\cite[page 102]{Be}. When $q\ge 1$, $H^q(G,M)$ may be defined also
by the bar resolution \cite[Chapters 7 and 8;Ev]{Se}.

\medskip
Consider the category of $R\pi$-lattices.
Most results in \cite{EM} and \cite[Section 1]{CTS} remain valid when we replace $\bm{Z}\pi$ by $R\pi$ where $R$ is a Dedekind domain such that $\fn{char}R=0$ and every prime divisor of $|\pi|$ is not invertible in $R$.
In particular, we may define the flabby class monoid $F_{R\pi}$ and the flabby resolution of an $R\pi$-lattice as follows.

\begin{defn}[{\cite[Definition 2.6]{Sw2}}] \label{d2.1}
Let $\pi$ be a finite group of order $n$, $R$ be a Dedekind domain such that $\fn{char}R=0$ and every prime divisor of $n$ is not invertible in $R$.
In the category of flabby $R\pi$-lattices,
we define an equivalence relation ``$\sim$":
Two flabby $R\pi$-lattices $M_1$ and $M_2$ are equivalent, denoted by $M_1\sim M_2$,
if and only if $M_1\oplus P_1\simeq M_2\oplus P_2$ for some permutation lattices $P_1$ and $P_2$.
Let $F_{R\pi}$ be the set of all such equivalence classes.
It is a monoid under direct sum.
$F_{R\pi}$ is called the flabby class monoid of $\pi$.
The equivalence class containing a flabby lattice $M$ is denoted by $[M]$.
\end{defn}

We will say that $[M]$ is invertible (resp.\ permutation) if there is a lattice $E$ such that $M\sim E$ and $E$ is invertible (resp.\ permutation).

\begin{defn} \label{d2.2}
Let $R$ and $\pi$ be the same as in Definition \ref{d2.1}. For any
$R\pi$-lattice $M$, there is an exact sequence of $R\pi$-lattices
$0\to M\to P\to E\to 0$ where $P$ is a permutation $R\pi$-lattice
and $E$ is a flabby $R\pi$-lattice. Such an exact sequence is
called a flabby resolution of $M$ \cite[Lemma 1.1; Sw1, Lemma
8.5]{EM}. If $0\to M\to P'\to E'\to 0$ is another flabby
resolution of $M$, it can be shown that $[E]=[E']$ in $F_{R\pi}$.
We define $[M]^{fl}=[E]\in F_{R\pi}$ (see \cite[Lemma 8.7]{Sw1}).
\end{defn}

\begin{lemma} \label{l2.3}
Let $R$ and $\pi$ be the same as in Definition \ref{d2.1}
\leftmargini=7mm
\begin{enumerate}
\item[$(1)$] {\rm (\cite[Lemma 3.1]{Sw2})} If $0\to M'\to M\to
M''\to 0$ is an exact sequence of $R\pi$-lattices where $M''$ is
an invertible $R\pi$-lattice, then
$[M]^{fl}=[M']^{fl}+[M'']^{fl}$. \item[$(2)$] {\rm (\cite[Lemma
3.3]{Sw2})} If $M$ is an $R\pi$-lattice which is an invertible
lattice over each Sylow subgroup of $\pi$, then $M$ is invertible.
\item[$(3)$] {\rm (\cite[Corollary 2.5]{Sw2})} If $0\to M'\to M\to
M''\to 0$ is an exact sequence of $R\pi$-lattices where $M''$ is
invertible and $M'$ is coflabby, then this exact sequence splits.
Similarly, the exact sequence $0\to M'\to M\to M''\to 0$ splits if
$M'$ is an invertible $R\pi$-lattice and $M''$ is a flabby
$R\pi$-lattice.
\end{enumerate}
\end{lemma}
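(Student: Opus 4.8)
The plan is to treat the three assertions as the $R\pi$-analogues of Swan's \cite[Corollary 2.5, Lemma 3.1, Lemma 3.3]{Sw2} and to observe that every argument is formal once the basic cohomological toolkit is available over a Dedekind domain $R$. The one substantive point to check at the outset is that an $R\pi$-lattice $M$, being a finitely generated torsion-free module over the Dedekind domain $R$, is $R$-projective; this is what makes Shapiro's lemma, the self-duality $M\simeq M^{00}$, and the annihilation of $\fn{Ext}^q_{R\pi}(M,N)$ by $n$ for $q\ge 1$ available exactly as over $\bm{Z}$. I would prove $(3)$ first, then $(2)$, then deduce $(1)$ from $(3)$.

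For $(3)$, the splitting of $0\to M'\to M\to M''\to 0$ is equivalent to the vanishing of its class in $\fn{Ext}^1_{R\pi}(M'',M')$. In the first case I would write $M''$ as a direct summand of a permutation lattice $P=\bigoplus_i \fn{Ind}_{H_i}^{\pi}R$; then $\fn{Ext}^1_{R\pi}(M'',M')$ is a direct summand of $\fn{Ext}^1_{R\pi}(P,M')\simeq\bigoplus_i H^1(H_i,M')$ by Shapiro's lemma, and each term vanishes because $M'$ is coflabby. For the second case I would dualize: since the dual of a permutation lattice is a permutation lattice, the dual of an invertible lattice is invertible and (by the standard duality interchanging the two notions) the dual of a flabby lattice is coflabby, so $0\to (M'')^0\to M^0\to (M')^0\to 0$ has invertible quotient and coflabby sub, and splits by the first case; applying $(-)^0$ once more splits the original sequence.

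For $(2)$ I would use the criterion that $M$ is invertible iff $\fn{Ext}^1_{R\pi}(M,C)=0$ for a suitable coflabby $C$: dualizing a flabby resolution $0\to M^0\to P\to E\to 0$ gives $0\to E^0\to P^0\to M\to 0$ with $P^0$ permutation and $E^0$ coflabby, and this splits precisely when $\fn{Ext}^1_{R\pi}(M,E^0)=0$, exhibiting $M$ as a summand of $P^0$. Now $\fn{Ext}^1_{R\pi}(M,E^0)$ is annihilated by $n$, and for each prime $p\mid n$ the identity $\fn{cor}\circ\fn{res}=[\pi:\pi_p]$ (prime to $p$) shows that restriction to a Sylow $p$-subgroup $\pi_p$ is injective on the $p$-primary part. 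Since $M|_{\pi_p}$ is invertible and $E^0|_{\pi_p}$ is still coflabby, $\fn{Ext}^1_{R\pi_p}(M,E^0)=0$ by the invertible direction of the criterion; hence every $p$-primary part vanishes and $\fn{Ext}^1_{R\pi}(M,E^0)=0$.

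For $(1)$, writing $M''\oplus N=Q$ with $Q$ permutation and $N$ invertible, I would form the pushout $T$ of a flabby resolution $0\to M'\to P'\to E'\to 0$ along the inclusion $M'\hookrightarrow M\oplus N$ coming from $0\to M'\to M\oplus N\to Q\to 0$. The pushout yields $0\to P'\to T\to Q\to 0$, which splits by $(3)$ because $P'$ is coflabby and $Q$ is invertible, so $T$ is permutation; and it also yields $0\to M\oplus N\to T\to E'\to 0$, a flabby resolution of $M\oplus N$. Hence $[M]^{fl}+[N]^{fl}=[M\oplus N]^{fl}=[E']=[M']^{fl}$, while $[M'']^{fl}+[N]^{fl}=[Q]^{fl}=0$; subtracting gives $[M]^{fl}=[M']^{fl}+[M'']^{fl}$. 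The only real obstacle throughout lies in the first paragraph: confirming that the duality $(-)^0$ on lattices, its preservation of permutation lattices, the flabby/coflabby interchange, and the $n$-torsion of the higher $\fn{Ext}_{R\pi}$ all hold verbatim over a Dedekind domain of characteristic zero; granting these, the homological arguments are identical to the classical case over $\bm{Z}$.
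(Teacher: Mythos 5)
Your proposal is correct and takes essentially the same route as the paper: the paper gives no argument of its own, citing Swan's results (\cite[Corollary 2.5, Lemmas 3.1 and 3.3]{Sw2}) together with the blanket remark that the $\bm{Z}\pi$-theory of \cite{EM,CTS} carries over to $R\pi$ for such $R$, and your three arguments --- Shapiro's lemma plus dualization for the splittings, restriction--corestriction on the $n$-torsion group $\fn{Ext}^1_{R\pi}(M,E^0)$ for the Sylow descent, and the pushout of a flabby resolution for the additivity of $[\,\cdot\,]^{fl}$ --- are precisely the standard proofs of those cited statements. The background facts you flag (exactness of $(-)^0$ on lattices, duality exchanging flabby and coflabby and preserving permutation/invertible, $n$-torsion of higher $\fn{Ext}$) are exactly the toolkit the paper itself assumes transfers from $\bm{Z}$ to $R$, so your treatment is at the same level of rigor as the paper's.
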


\section{Tensor products of normal domains}

The purpose of this section is to find some sufficient conditions
to ensure that $R[\zeta_n]$ is a Dedekind domain when $R$ is a
Dedekind domain. The problem is reduced to the following: If
$R_0$, $R_1$, $R_2$ are normal domains (i.e.\ commutative
noetherian integral domains which are integrally closed), and $R_0
\subset R_1$, $R_0 \subset R_2$, when is the tensor product
$R_1\otimes_{R_0} R_2$ a normal domain?

We recall two fundamental lemmas.

\begin{lemma}[{\cite[page 172, (42.9)]{Na}}] \label{l3.1}
Let $R$ be  normal domain containing a field $k$ and $K$ be an extension field of $k$.
Suppose that $K$ is separably generated over $k$ and $K\otimes_k R$ is an integral domain.
Then $K\otimes_k R$ is a normal domain.
\end{lemma}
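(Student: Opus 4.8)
The plan is to reduce to the case where $K$ is finitely generated over $k$, and then to split that extension into a purely transcendental layer and a finite separable layer, treating each by a standard normality argument. First I would write $K$ as the filtered union of its subextensions $K_\alpha$ that are finitely generated over $k$, so that $K\otimes_k R=\varinjlim_\alpha(K_\alpha\otimes_k R)$. Since $R$ is flat over $k$ (being a $k$-vector space), each $K_\alpha\otimes_k R$ injects into $K\otimes_k R$ and is therefore an integral domain; moreover, separable generation of $K/k$ makes $K/k$ a separable extension, so every $K_\alpha/k$ is separably generated. Because a filtered colimit of integrally closed domains along injective transition maps is again integrally closed (any integral equation for an element of the fraction field involves finitely many elements, hence already lives in one $K_\alpha\otimes_k R$), it suffices to treat the finitely generated case.

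In that case I would pick a separating transcendence basis $x_1,\dots,x_d$, so that $L:=k(x_1,\dots,x_d)$ sits inside $K$ with $K/L$ finite separable, and use the isomorphism $K\otimes_k R\cong K\otimes_L(L\otimes_k R)$. For the transcendental layer, $R[x_1,\dots,x_d]$ is a normal domain (a polynomial ring over a normal domain is normal), and $L\otimes_k R$ arises from it by inverting the multiplicative set $k[x_1,\dots,x_d]\setminus\{0\}$; a localization of a normal domain is normal, so $S:=L\otimes_k R$ is a normal domain containing the field $L$.

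The crux is the finite separable layer: I must show that $T:=K\otimes_L S$ is normal, knowing that $T\cong K\otimes_k R$ is a domain by hypothesis. By the primitive element theorem write $K=L(\theta)$ with separable minimal polynomial, so $T=S[\theta]$ is free over $S$ on a basis $e_1,\dots,e_n$ coming from an $L$-basis of $K$; localizing at $S\setminus\{0\}$ identifies $\fn{Frac}(T)$ with the field $E:=K\otimes_L\fn{Frac}(S)$, a finite separable extension of $F:=\fn{Frac}(S)$. For $z\in E$ integral over $T$ (equivalently over $S$, since $T$ is integral over $S$), applying $\fn{Tr}_{E/F}(ze_j)$ expresses the coordinates of $z$ via the inverse of $M=(\fn{Tr}_{E/F}(e_ie_j))$. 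As $S$ is normal, $\fn{Tr}_{E/F}$ carries elements integral over $S$ into $S$, and the entries of $M$ lie in $L\subseteq S$, so Cramer's rule gives the standard containment $T=S[\theta]\subseteq B\subseteq d^{-1}S[\theta]$, where $B$ is the integral closure of $S$ in $E$ and $d=\det M=\fn{disc}_{K/L}(e_1,\dots,e_n)\in L$ is nonzero because $K/L$ is separable.

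The point that makes everything close up — and the step I expect to be the real obstacle to get right — is that $d\in L^\times$, and since $S$ contains the field $L$, every nonzero element of $L$ is a unit of $S$. Hence $d^{-1}S[\theta]=S[\theta]$, the sandwich collapses to $B=S[\theta]=T$, and $T$ is normal. This is precisely where the hypothesis that $R$ contains the field $k$ is essential: it forces the discriminant to be invertible. Without an ambient field the same discriminant would be only a nonzero nonunit, and $T$ could fail to be integrally closed, as the arithmetic example $\bm{Z}[\sqrt5]$ shows.
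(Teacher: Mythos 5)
The paper offers no proof of Lemma~\ref{l3.1} at all: it is quoted directly from Nagata's \emph{Local Rings} \cite[page 172, (42.9)]{Na} (a result the paper's remark attributes to Nakai), so there is no in-paper argument to compare yours against, and your proposal must be judged on its own merits. On those merits it is correct. The reduction to the finitely generated case is sound: separably generated implies separable (MacLane's criterion), so each finitely generated $K_\alpha/k$ is again separably generated; flatness of $R$ over the field $k$ makes each $K_\alpha\otimes_k R$ a subring of the domain $K\otimes_k R$, hence a domain; and integral closedness passes to filtered unions since any integral dependence relation involves only finitely many elements. The finitely generated case is also handled correctly: $S=L\otimes_k R$ is a localization of the normal domain $R[x_1,\dots,x_d]$, hence normal; $T=K\otimes_L S\simeq K\otimes_k R$ is a domain by hypothesis, so $E=T\otimes_S F$ (with $F=\fn{Frac}(S)$) is a domain that is finite-dimensional over $F$, hence a field, equal to $\fn{Frac}(T)$ and separable over $F$ (the minimal polynomial of $\theta$ over $F$ divides the separable $f$). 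Your assertion that the entries of $M$ lie in $L$ is justified because multiplication by an element of $K$ has the same matrix on $E$ over $F$ as on $K$ over $L$, so $\fn{Tr}_{E/F}$ restricts to $\fn{Tr}_{K/L}$ on $K$; the trace argument then traps the integral closure between $S[\theta]$ and $d^{-1}S[\theta]$, and your key observation --- that $d=\fn{disc}_{K/L}(e_1,\dots,e_n)$ lies in $L^\times$ and hence is a unit of $S$ because $S$ contains $L$ --- is exactly what collapses the sandwich. This is indeed the precise point where the hypothesis that $R$ contains a field enters, as your $\bm{Z}[\sqrt{5}]$ remark correctly illustrates.

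One small caveat: the paper's convention is that ``normal domain'' includes noetherian. Your argument yields noetherianity in the finitely generated case ($T$ is module-finite over a localization of a polynomial ring over the noetherian ring $R$), but the colimit step only yields integral closedness. This is a defect of the statement's generality rather than of your argument, and it is harmless for the paper's purposes: in the proof of Theorem~\ref{t3.3} the lemma is applied with $k=\bm{Q}$ and $K=\bm{Q}(\zeta_n)$ finite over $k$, where your reduction step is not even needed.
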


\begin{lemma}[{\cite[page 173, (42.12)]{Na}}] \label{l3.2}
Let $R_1$ and $R_2$ be normal domains containing a DVR which is
designated as $R_0$. Denote by $u$ a prime element of $R_0$.
Assume that {\rm (i)} $R_1\otimes_{R_0} R_2$ is a noetherian
integral domain, {\rm (ii)} both $R_1$ and $R_2$ are separably
generated over $R_0$, and {\rm (iii)} for any prime divisor $Q$ of
$uR_1$, $u \cdot (R_1)_Q=Q \cdot (R_1)_Q$ and $R_1/Q$ is separably
generated over $R_0/uR_0$. Then $R_1\otimes_{R_0} R_2$ is a normal
domain.
\end{lemma}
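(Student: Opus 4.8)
\emph{Proof proposal.} The plan is to apply Serre's criterion to the noetherian domain $A:=R_1\otimes_{R_0}R_2$: it is normal as soon as it is regular in codimension one and satisfies $(S_2)$. The whole argument rests on flatness over the DVR $R_0$. Since $R_1$ and $R_2$ are domains containing $R_0$ they are $R_0$-torsion-free, hence $R_0$-flat; consequently $A$ is flat over $R_0$, and by base change flat over $R_1$ and over $R_2$. Writing $K=\fn{Frac}(R_0)$ and $S_i=R_i[1/u]$, the localization $A[1/u]=S_1\otimes_K S_2$ is a tensor product over the field $K$ of the normal domains $S_1,S_2$ (localizations of normal domains), which are separably generated over $K$ by hypothesis (ii); as $A[1/u]$ is a domain, Lemma \ref{l3.1} shows it is normal. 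Thus $A_{\mathfrak P}$ is already normal for every prime $\mathfrak P$ with $u\notin\mathfrak P$, and only the primes containing $u$ remain.

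For regularity in codimension one, let $\mathfrak P$ be a height-one prime with $u\in\mathfrak P$ and put $Q_i=\mathfrak P\cap R_i$. Flatness of $A$ over $R_2$ and the dimension formula force $Q_2$ to have height one, so $(R_2)_{Q_2}$ is a DVR and the fibre of $A_{\mathfrak P}$ over it is zero-dimensional, namely a localization of
\[
A\otimes_{R_2}\kappa(Q_2)=(R_1/uR_1)\otimes_{k_0}\kappa(Q_2),\qquad k_0:=R_0/uR_0.
\]
Here hypothesis (iii) is decisive: the unramifiedness $u\cdot(R_1)_Q=Q\cdot(R_1)_Q$ makes $R_1/uR_1$ generically reduced, and together with $(S_1)$ (inherited from the normal $R_1$ by cutting the nonzerodivisor $u$) it is reduced; separability of the residue extensions then renders $R_1/uR_1$ \emph{geometrically} reduced over $k_0$. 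Hence the displayed fibre is reduced, its localization at $\mathfrak P$ is a field, and a local ring flat over the DVR $(R_2)_{Q_2}$ with its residue field as closed fibre is itself a DVR. Therefore $A_{\mathfrak P}$ is regular. I expect this to be the main obstacle, and it is exactly the place where the asymmetry of the hypotheses matters: a possible ramification of $R_2$ along $u$ is harmless, because the uniformizer of $A_{\mathfrak P}$ is contributed by $R_2$, while the good behaviour of $R_1$ keeps the closed fibre a field rather than a fat point.

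It remains to check $(S_2)$. For a prime $\mathfrak P\ni u$ the relevant base is the DVR $R_0$, and the flat local depth formula gives $\fn{depth}A_{\mathfrak P}=1+\fn{depth}(A_{\mathfrak P}/uA_{\mathfrak P})$ while $\dim A_{\mathfrak P}=1+\dim(A_{\mathfrak P}/uA_{\mathfrak P})$. Comparing these, $(S_2)$ for $A$ at such $\mathfrak P$ amounts exactly to $A/uA$ satisfying $(S_1)$. Now $A/uA=(R_1/uR_1)\otimes_{k_0}(R_2/uR_2)$ is flat over $R_2/uR_2$, the latter being $(S_1)$ because $R_2$ is normal and $u$ is a nonzerodivisor; its fibres have the form $(R_1/uR_1)\otimes_{k_0}\kappa$, which are reduced by the geometric reducedness established above. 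Ascent of $(S_1)$ along this flat map yields that $A/uA$ is $(S_1)$, whence $A$ is $(S_2)$. Combining regularity in codimension one with $(S_2)$, Serre's criterion shows that $A=R_1\otimes_{R_0}R_2$ is a normal domain.
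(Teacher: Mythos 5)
Your overall strategy---Serre's criterion for the noetherian domain $A=R_1\otimes_{R_0}R_2$, using flatness over the DVR $R_0$ to split the analysis into the locus where $u$ is invertible and the height-one primes containing $u$---is sound; note that the paper itself offers no proof to compare against, since Lemma \ref{l3.2} is quoted directly from Nagata \cite[(42.12)]{Na}, so yours is an independent argument. Your treatment of the primes $\mathfrak P\ni u$ (flatness over $R_2$ forcing $\fn{ht}(\mathfrak P\cap R_2)=1$, hypothesis (iii) making $R_1/uR_1$ reduced and then geometrically reduced over $k_0$, so that the closed fibre of $A_{\mathfrak P}$ over the DVR $(R_2)_{\mathfrak P\cap R_2}$ is a field and $A_{\mathfrak P}$ is a DVR), and your reduction of $(S_2)$ for $A$ to $(S_1)$ for $A/uA$ via the flat local depth and dimension formulas, are both correct.

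There is, however, one genuine gap: the opening claim that $A[1/u]=S_1\otimes_K S_2$ is normal ``by Lemma \ref{l3.1}.'' Lemma \ref{l3.1} does not apply as you invoke it: it concerns $K\otimes_k R$ where one tensor factor is a \emph{field} extension of the base field, whereas neither $S_1$ nor $S_2$ is a field in general. The conclusion you want is true, but it needs an extra (standard) step. Write $L_i=\fn{Frac}(S_i)$. Then $L_1\otimes_K S_2$ and $S_1\otimes_K L_2$ are localizations of the domain $A[1/u]$, hence domains with the same quotient field as $A$, and Lemma \ref{l3.1} \emph{does} apply to each of them (once with the field $L_1$ and the normal domain $S_2$, once with the field $L_2$ and the normal domain $S_1$; this is exactly where hypothesis (ii) for \emph{both} factors is needed). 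Since, inside $L_1\otimes_K L_2$, one has the linear-algebra identity
\[
(L_1\otimes_K S_2)\cap (S_1\otimes_K L_2)=S_1\otimes_K S_2 ,
\]
the ring $S_1\otimes_K S_2$ is an intersection of two normal domains with a common quotient field, and is therefore normal. With this patch your proof goes through. A smaller point of the same kind: the assertion that separability of the residue extensions makes $R_1/uR_1$ geometrically reduced deserves a line of justification---embed the reduced ring $R_1/uR_1$ into $\prod_Q\kappa(Q)$, the product over the minimal primes $Q$ of $uR_1$, tensor with a field extension $k'/k_0$ (flat, so injectivity is preserved), and use hypothesis (iii) together with MacLane's criterion to see that each $\kappa(Q)\otimes_{k_0}k'$ is reduced.
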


\begin{remark}
According to \cite[page 146]{Na}, if $R_0$ is a subring of a
commutative integral domain $R$ with $k$, $K$ being the quotient
fields of $R_0$, $R$ respectively, we say that $R$ is separably
generated over $R_0$, if (i) $\fn{char}R=0$, or (ii)
$\fn{char}R=p>0$ and $K\otimes_k k^{1/p}$ is an integral domain.
Consequently, if $\fn{char}k=0$ or $k=\bm{F}_q$ is a finite field,
then $R$ is separably generated over $R_0$. Note that Lemma
\ref{l3.1} and Lemma \ref{l3.2} are due to Nakai and Nagata
respectively; see \cite[page 220]{Na}.
\end{remark}

\begin{theorem} \label{t3.3}
Let $n$ be a positive integer and $R$ be a Dedekind domain.
Denote by $R[X]$ the polynomial ring over $R$.
Assume that {\rm (i)} $\fn{char}R=0$, {\rm (ii)} every prime divisor of $n$ is not invertible in $R$,
and {\rm (iii)} $p$ is unramified in $R$ for any prime divisor $p$ of $n$.
Then $R[X]/\langle \Phi_n(X)\rangle$ is a Dedekind domain and $R[X]/\langle \Phi_n(X)\rangle \simeq R[\zeta_n]$.
\end{theorem}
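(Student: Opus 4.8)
The plan is to deduce everything from a single arithmetic fact: that $\Phi_n$ is irreducible over $K:=\fn{Frac}(R)$. First observe that, since $\fn{char}R=0$ forces $\bm{Q}\subseteq R$, there is a natural identification $R[X]/\langle\Phi_n(X)\rangle\cong\bm{Q}(\zeta_n)\otimes_{\bm{Q}}R$. If $\Phi_n$ is irreducible over $K$, then division by the monic polynomial $\Phi_n$ shows that the kernel of the evaluation map $R[X]\to R[\zeta_n]\subseteq K(\zeta_n)$, $X\mapsto\zeta_n$, is exactly $\langle\Phi_n\rangle$, which simultaneously yields the domain property and the isomorphism $R[X]/\langle\Phi_n\rangle\cong R[\zeta_n]$. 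As this ring is module-finite over the one-dimensional noetherian domain $R$, it is automatically noetherian of Krull dimension one, so the whole theorem follows once I also establish normality.

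The main obstacle, and the only place where hypothesis (iii) is used, is the irreducibility of $\Phi_n$ over $K$. Since $\bm{Q}(\zeta_n)/\bm{Q}$ is abelian, restriction gives $\fn{Gal}(K(\zeta_n)/K)\cong\fn{Gal}(\bm{Q}(\zeta_n)/L)$ with $L:=K\cap\bm{Q}(\zeta_n)$, so that $[K(\zeta_n):K]=[\bm{Q}(\zeta_n):L]$ and hence $\Phi_n$ is irreducible over $K$ if and only if $L=\bm{Q}$. Suppose to the contrary that $L\supsetneq\bm{Q}$. By Minkowski's theorem some finite prime of $L$ ramifies over $\bm{Q}$; such a prime ramifies in the larger field $\bm{Q}(\zeta_n)$ and therefore divides $n$, say it lies over the rational prime $p\mid n$. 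Because $L\subseteq K$ and $R$ is integrally closed in $K$, the ring of integers $\mathcal{O}_L$ (being integral over $\bm{Z}\subseteq R$) is contained in $R$. Picking a prime $\mathfrak{q}$ of $\mathcal{O}_L$ over $p$ with $e(\mathfrak{q}/p)>1$ and any prime $\mathfrak{m}$ of $R$ lying over $\mathfrak{q}$, multiplicativity of ramification gives $v_{\mathfrak{m}}(p)=e(\mathfrak{m}/\mathfrak{q})\,v_{\mathfrak{q}}(p)\ge e(\mathfrak{q}/p)>1$, so $pR_{\mathfrak{m}}\subsetneq\mathfrak{m}R_{\mathfrak{m}}$. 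This contradicts the unramifiedness of $p$ in $R$ furnished by (iii), and therefore $L=\bm{Q}$.

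For normality I would invoke Lemma \ref{l3.1} with the field $k=\bm{Q}\subseteq R$ and the separably generated (finite separable) extension $K'=\bm{Q}(\zeta_n)$: since $\bm{Q}(\zeta_n)\otimes_{\bm{Q}}R\cong R[X]/\langle\Phi_n\rangle$ is an integral domain by the irreducibility just proved, Lemma \ref{l3.1} shows at once that it is a normal domain. Matching the local viewpoint of this section, one may instead check normality of $R[X]/\langle\Phi_n\rangle\otimes_R R_{\mathfrak{p}}$ at each nonzero prime $\mathfrak{p}$ of $R$, of residue characteristic $p$: when $p\nmid n$ the discriminant of $\Phi_n$ is a unit in $R_{\mathfrak{p}}$, so the extension is unramified over the DVR $R_{\mathfrak{p}}$ and hence normal; when $p\mid n$ one applies Lemma \ref{l3.2} with $R_0=\bm{Z}_{(p)}$, $R_1=R_{\mathfrak{p}}$, $R_2=\bm{Z}_{(p)}[\zeta_n]$ and uniformizer $u=p$, where condition (iii) of Lemma \ref{l3.2} collapses to $pR_{\mathfrak{p}}=\mathfrak{p}R_{\mathfrak{p}}$, which is exactly hypothesis (iii), the residue-field separability being automatic because $\bm{F}_p$ is perfect.

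Putting these together, $R[X]/\langle\Phi_n\rangle$ is a noetherian, integrally closed domain of Krull dimension one, i.e.\ a Dedekind domain, and it is isomorphic to $R[\zeta_n]$. I expect the irreducibility of $\Phi_n$ over $K$ to be the genuinely delicate point, since it is precisely what can fail once hypothesis (iii) is dropped: for instance, taking $R=\bm{Z}[\zeta_p]$ and $n=p$, the prime $p$ ramifies in $R$ and $\Phi_p$ splits over $K=\bm{Q}(\zeta_p)$, so the tensor product acquires zero divisors and the conclusion breaks down — the same phenomenon recorded in Example \ref{ex4.3}. Everything else in the argument is either formal or a direct application of Lemmas \ref{l3.1}--\ref{l3.2}.
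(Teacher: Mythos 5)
Your opening claim that ``$\fn{char}R=0$ forces $\bm{Q}\subseteq R$'' is false, and it is not a cosmetic slip: characteristic zero gives only $\bm{Z}\subseteq R$, and under hypothesis (ii) the inclusion $\bm{Q}\subseteq R$ is in fact \emph{impossible} whenever $n>1$, since a prime divisor of $n$ must be non-invertible in $R$ (think of $R=\bm{Z}$ itself). Two parts of your argument rest on this claim. First, the ``natural identification'' $R[X]/\langle\Phi_n(X)\rangle\cong\bm{Q}(\zeta_n)\otimes_{\bm{Q}}R$ is not even well-defined, because $R$ is not a $\bm{Q}$-algebra; the correct statement, and the one the paper uses, is $R[X]/\langle\Phi_n(X)\rangle\cong\bm{Z}[\zeta_n]\otimes_{\bm{Z}}R$. (Your other route to the isomorphism and the domain property --- irreducibility of $\Phi_n$ over $K$ plus division by the monic $\Phi_n$ inside the evaluation map $R[X]\to R[\zeta_n]$ --- is fine and needs no such identification.) Second, and more seriously, your principal normality argument collapses: Lemma \ref{l3.1} requires the normal domain to \emph{contain} the field $k$, and you take $k=\bm{Q}\subseteq R$, which does not hold. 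This is precisely why the paper invokes Lemma \ref{l3.1} only for localizations $R_Q$ with $Q\cap\bm{Z}=0$ (equivalently, after inverting $T=\bm{Z}\setminus\{0\}$), where $\bm{Q}$ genuinely is contained, and uses Lemma \ref{l3.2} everywhere else.

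Your fallback local argument nearly repairs this, but it inherits the same blind spot: you run over ``each nonzero prime $\mathfrak{p}$ of $R$, of residue characteristic $p$'', splitting into $p\mid n$ and $p\nmid n$, which tacitly assumes every maximal ideal of $R$ meets $\bm{Z}$ in a nonzero prime. That can fail: localizing $\bm{Z}[t]$ at the complement of $(p)\cup(t)$ yields a Dedekind domain satisfying (i)--(iii) for $n$ a power of $p$ which has a maximal ideal $tR$ with $tR\cap\bm{Z}=0$; the paper's Step 2 has a third case exactly for such primes. Fortunately your discriminant argument extends verbatim: when $\mathfrak{p}\cap\bm{Z}=0$ the nonzero integer $\fn{disc}(\Phi_n)$ is a unit in $R_{\mathfrak{p}}$, so $R_{\mathfrak{p}}[X]/\langle\Phi_n(X)\rangle$ is \'etale (flatness being automatic, as the module is free) over the normal ring $R_{\mathfrak{p}}$, hence normal --- but this case must be stated. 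A last small point in the irreducibility step: having chosen a ramified prime $\mathfrak{q}$ of $\mathcal{O}_L$ over $p$, a prime $\mathfrak{m}$ of $R$ lying over that particular $\mathfrak{q}$ need not exist ($\mathfrak{q}$ may become invertible in $R$, as $R$ is not assumed integral over $\mathcal{O}_L$); argue instead that hypothesis (ii) supplies a maximal ideal $\mathfrak{m}\supseteq pR$, whose contraction $\mathfrak{m}\cap\mathcal{O}_L$ is a prime of $\mathcal{O}_L$ over $p$ and is ramified because $L/\bm{Q}$ is abelian, so \emph{all} primes of $\mathcal{O}_L$ over $p$ are ramified. With these repairs your proof is correct and follows the same two-step outline as the paper's: irreducibility of $\Phi_n$ over $K$ via ramification considerations, then normality checked prime by prime, with Lemma \ref{l3.2} at primes dividing $n$ and your \'etale argument serving as a clean substitute for the paper's second and third cases.
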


\begin{proof}
Step 1. Let $\Omega$ be an algebraically closed field containing
$K$ where $K$ is the quotient field of $R$. Let $\zeta_n$ be a
primitive $n$-th root of unity in $\Omega$. We will show that,
within $\Omega$, the subfields $K$ and $\bm{Q}(\zeta_n)$ are
linearly disjoint over $\bm{Q}$.

Let $k=K\cap \bm{Q}(\zeta_n)$.  We will show that $k=\bm{Q}$.

Otherwise, $\bm{Q}\subsetneq k$. Then there is some prime number
$p$ such that $p$ ramifies in $k$. Since $\bm{Q}\subset k\subset
\bm{Q}(\zeta_n)$, it is necessary that $p$ divides $n$. By
assumptions, $p$ is unramified in $R$. Thus $p$ is also unramified
in $k$ because $\bm{Q}\subset k\subset K$. This is a
contradiction.

Once we know $\bm{Q}=k$, it is easy to see that $\Phi_n(X)$ is
irreducible in $K[X]$. Suppose not. Write $\Phi_n(X)=f_1(X)\cdot
f_2(X)$ where $f_1(X), f_2(X)\in K[X]$ are monic polynomials and
$\deg f_1(x)<\deg \Phi_n(X)$. Since the roots of $f_1(X)$ are
primitive $n$-the roots of unity, it follows that the coefficients
of $f_1(X)$ belong to $\bm{Q}(\zeta_n)$. Thus these coefficients
lie in $K\cap \bm{Q}(\zeta_n)=\bm{Q}$. Hence $f_1(X)\in\bm{Q}[X]$,
which is impossible.

We conclude that $\Phi_n(X)$ is irreducible in $K[X]$.
Thus $\bm{Q}(\zeta_n)$ is linearly disjoint from $K$ over $\bm{Q}$ \cite[page 49]{La}.

It follows that the canonical map $\bm{Q}(\zeta_n)\otimes_{\bm{Q}} K\to \bm{Q}(\zeta_n)\cdot K=K(\zeta_n)$ is an isomorphism \cite[page 49]{La}.
Thus $\bm{Z}[\zeta_n]\otimes_{\bm{Z}} R\to R[\zeta_n]$ is also an isomorphism.
Note that $\bm{Z}[\zeta_n]\otimes_{\bm{Z}} R\simeq R[X]/\langle \Phi_n(X)\rangle$.
In particular, $R[X]/\langle \Phi_n(X)\rangle$ is a noetherian integral domain.

\bigskip
Step 2. It remains to show that $R[X]/\langle \Phi_n(X)\rangle$ is
integrally closed. Remember that $R[X]/\langle \Phi_n(X)\rangle
\simeq \bm{Z}[\zeta_n]\otimes_{\bm{Z}} R$.

For any non-zero prime ideal $Q$ of $R$, let $R_Q$ be the
localization of $R$ at $Q$. We will show that $R_Q[X]/\langle
\Phi_n(X)\rangle$ is integrally closed for all such $Q$. Because
$R[X]/\langle \Phi_n(X)\rangle =\bigcap_Q R_Q[X]/\langle
\Phi_n(X)\rangle$, this will show that $R[X]/\langle
\Phi_n(X)\rangle$ is integrally closed.

Suppose that $Q\cap \bm{Z} \neq 0$ and $Q\cap \bm{Z}=\langle
q\rangle$ for some prime number $q$. If $q$ is a divisor of $n$,
then $q$ is unramified in $R$. Let $S=\bm{Z}\backslash \langle
q\rangle$ and $\bm{Z}_q=S^{-1}\bm{Z}$ be the localization of
$\bm{Z}$ at $\langle q\rangle$. Then $S^{-1}R[X]/\langle
\Phi_n(X)\rangle \simeq \bm{Z}_q[\zeta_n]\otimes_{\bm{Z}_q}
(S^{-1}R)$. Apply Lemma \ref{l3.2}. Note that the assumptions of
Lemma \ref{l3.2} are fulfilled, e.g.\ if $Q'$ is a prime divisor
of $qR$, then $S^{-1}R/S^{-1}Q'$ is separably generated over
$\bm{Z}_q/q\bm{Z}_q$ because $\bm{Z}_q/q\bm{Z}_q\simeq \bm{F}_q$
is a finite field. Hence $S^{-1}R[X]/\langle \Phi_n(X)\rangle$ is
a normal domain. Since $R_Q[X]/\langle \Phi_n(X)\rangle$ is a
localization of $S^{-1}R[X]/\langle\Phi_n(X)\rangle$, it follows
that $R_Q[X]/\langle \Phi_n(X)\rangle$ is integrally closed.

Suppose that $Q\cap \bm{Z} \neq 0$ and $Q\cap \bm{Z}=\langle
q\rangle$ for some prime number $q$ such that $q$ is not a divisor
of $n$. Then $q$ is unramified in $\bm{Z}[\zeta_n]$. Thus we my
apply the same arguments as above and apply Lemma \ref{l3.2} to
$S^{-1}\bm{Z}[\zeta_n]=\bm{Z}_q[\zeta_n]$. Hence
$R_Q[X]/\langle\Phi_n(X)\rangle$ is also integrally closed.

Suppose that $Q\cap \bm{Z}=0$. Then $\bm{Q} \subset R_Q$. Let
$T=\bm{Z}\backslash \{0\}$. Then $T^{-1}R[X]/\langle
\Phi_n(X)\rangle\allowbreak\simeq
T^{-1}\bm{Z}[\zeta_n]\otimes_{\bm{Q}}
R_Q=\bm{Q}(\zeta_n)\otimes_{\bm{Q}} R_Q$. Apply Lemma \ref{l3.1}.
We find that $T^{-1}R[X]/\langle \Phi_n(X)\rangle$ is a normal
domain. Hence the result.
\end{proof}

\begin{remark}
We thank Nick Ramsey for pointing out that Proposition 17 of
\cite[page 19]{Se} provides a special case of Theorem \ref{t3.3} :
If $p$ is a prime number and $R$ is a DVR with maximal ideal $pR$,
then $R[X]/\langle\Phi_{p^t}(X)\rangle$ is again a DVR where $t$
is any positive integer. \end{remark}

\section{Proof of Theorem \ref{t1.4}}

The following lemma is a generalization of \cite[Lemma 4.3]{Sw2}.

\begin{lemma} \label{l4.1}
Let $\pi$ be a cyclic $p$-group of order $n$. Write
$\pi=\langle\sigma\rangle$. Let $R$ be a Dedekind domain such that
$\fn{char} R=0$ and $p$ is unramified in $R$. Let $M$ be a
finitely generated module over $R\pi/\langle\Phi_n(\sigma)\rangle$
such that $M$ is a torsion-free $R$-module when it is regarded as
an $R$-module. Then $[M]^{fl}$ is an invertible $R\pi$-lattice.
\end{lemma}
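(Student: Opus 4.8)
The plan is to exploit the fact that, by Theorem \ref{t3.3}, the quotient ring $S:=R\pi/\langle\Phi_n(\sigma)\rangle\simeq R[\zeta_n]$ is a Dedekind domain, and to view $M$ as a module over $S$. Write $n=p^t$ and let $K$ be the quotient field of $R$. Step 1 of the proof of Theorem \ref{t3.3} shows that $\Phi_n(X)$ is irreducible over $K$, so $S\otimes_R K\simeq K(\zeta_n)$ is a field; since $M$ is torsion-free over $R$ it embeds $S$-linearly into the $K(\zeta_n)$-vector space $M\otimes_R K$, whence $M$ is a finitely generated torsion-free $S$-module, hence projective over the Dedekind domain $S$. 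By the Steinitz structure theorem I may write $M\simeq S^{\,r-1}\oplus I$ for some nonzero ideal $I\subseteq S$, where $r$ is the $S$-rank. Because a direct sum of flabby resolutions is again a flabby resolution, $[-]^{fl}$ is additive on direct sums, giving $[M]^{fl}=(r-1)[S]^{fl}+[I]^{fl}$ in $F_{R\pi}$; as a sum of invertible elements in a commutative monoid is invertible, it suffices to prove that both $[S]^{fl}$ and $[I]^{fl}$ are invertible.

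For $[S]^{fl}$ I would exhibit an explicit flabby resolution. Let $\pi'=\langle\sigma^{p^{t-1}}\rangle$, a subgroup of order $p$. Using $\fn{Ann}_{R\pi}(\sigma^{p^{t-1}}-1)=\langle\Phi_n(\sigma)\rangle$, multiplication by $\sigma^{p^{t-1}}-1$ identifies $S=R\pi/\langle\Phi_n(\sigma)\rangle$ with the ideal $(\sigma^{p^{t-1}}-1)R\pi$, which is precisely the kernel of the ring surjection $R\pi\twoheadrightarrow R[X]/\langle X^{p^{t-1}}-1\rangle\simeq R[\pi/\pi']$. This yields an exact sequence $0\to S\to R\pi\to R[\pi/\pi']\to 0$ of $R\pi$-lattices (the ranks $\varphi(p^t)$, $p^t$, $p^{t-1}$ add up correctly). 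Since $R\pi$ is permutation and $R[\pi/\pi']$ is permutation, hence flabby, this is a flabby resolution of $S$, so $[S]^{fl}=[R[\pi/\pi']]=0$, the identity of $F_{R\pi}$, which is in particular invertible.

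It remains to treat a general ideal $I$. Here I would choose an ideal $J$ in the inverse ideal class, so that $IJ$ is principal and the Steinitz isomorphism gives $I\oplus J\simeq S\oplus IJ\simeq S^{2}$; additivity then forces $[I]^{fl}+[J]^{fl}=[S^2]^{fl}=2[S]^{fl}=0$, so $[I]^{fl}$ is a unit in $F_{R\pi}$, and combined with $[S]^{fl}=0$ this gives $[M]^{fl}=[I]^{fl}$ invertible. The step requiring the most care is exactly this reduction to a single ideal together with the pairing argument: over $R=\bm{Z}$ one could instead invoke finiteness of $\fn{Cl}(\bm{Z}[\zeta_n])$ and argue that every ideal class is torsion, but for a general ``nice'' $R$ the class group of $S$ may be infinite, so the identity $I\oplus J\simeq S^{2}$ (rather than any finiteness) is what makes every $[I]^{fl}$ a unit. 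I should also verify the elementary monoid fact that $a+b=0$ in $F_{R\pi}$ forces $a$ to be invertible, and record that being a unit in $F_{R\pi}$ agrees with the paper's notion of an invertible class, i.e.\ that a flabby representative whose class is a unit is a direct summand of a permutation lattice.
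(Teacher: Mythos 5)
Your proposal is correct and takes essentially the same approach as the paper: the identical exact sequence $0\to R\pi/\langle\Phi_n(\sigma)\rangle\to R\pi\to R\pi''\to 0$ (embedding via multiplication by $\sigma^{n/p}-1$) to get $[S]^{fl}=0$, followed by projectivity of $M$ over the Dedekind domain $S\simeq R[\zeta_n]$ furnished by Theorem \ref{t3.3}. Your Steinitz decomposition $M\simeq S^{r-1}\oplus I$ together with $I\oplus J\simeq S^2$ is just a concrete way of producing the complement $N$ with $M\oplus N\simeq S^{(t)}$ that the paper invokes directly, and the closing monoid observation (a class with an inverse in $F_{R\pi}$ has an invertible flabby representative) is the same step the paper leaves implicit.
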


\begin{proof}
Write $n=pq$ and define $\pi''=\pi/\langle\sigma^q\rangle$.
From the factorization $X^n-1= (X^q-1)\Phi_n(X)$, we get an exact sequence $0\to R\pi/\langle\Phi_n(\sigma)\rangle\to R\pi\to R\pi''\to 0$.
Note that $R\pi/\langle\Phi_n(\sigma)\rangle \simeq R[X]/\langle\Phi_n(X)\rangle \simeq R[\zeta_n]$ is a Dedekind domain by Theorem \ref{t3.3}.
This provides a flabby resolution of the $R\pi$-lattice $R\pi/\langle\Phi_n(\sigma)\rangle$.
Hence $[R\pi/\langle\Phi_n(\sigma)\rangle]^{fl}\allowbreak=[R\pi'']=0$.

Since $M$ is torsion-free, $M$ is a projective module over the Dedekind domain $R\pi/\langle\Phi_n(\sigma)\rangle$.
Thus we may find another module $N$ satisfying that $M\oplus N\simeq (R\pi/\allowbreak\langle\Phi_n(\sigma)\rangle)^{(t)}$ for some integer $t$.
Thus $[M]^{fl}+[N]^{fl}=t[R\pi/\langle\Phi_n(\sigma)\rangle]^{fl}=0$.
It follows that $[M]^{fl}$ is invertible.
\end{proof}

\bigskip
\begin{proof}[Proof of Theorem \ref{t1.4}] ~ \par
The proof of Theorem \ref{t1.4} is almost the same as that in
\cite[Theorem 1.3; Sw2, Theorem 4.4]{EM}, once Lemma \ref{l4.1} is
obtained. In order not to commit a blunder mistake, we choose to
rewrite the proof once again.

\bigskip
$(3)\Rightarrow (1)$

Step 1. Assume that all the Sylow subgroups of $\pi$ are cyclic.
Let $M$ be an $R\pi$-lattice which is flabby (resp.\ coflabby). We
will show that $M$ is invertible.

By Lemma \ref{l2.3}, it suffices to show that $M$ is an invertible $R\pi_p$-lattice where $\pi_p$ is a $p$-Sylow subgroup of $\pi$
and $p$ is a prime divisor of $|\pi|$.
Thus we may assume that $\pi=\langle\sigma\rangle$ is a cyclic $p$-group of order $n$,
without loss of generality.

\medskip
Step 2. For any $R\pi$-lattice $M$,
we claim that $M$ is flabby if and only if it is coflabby.

Since $\pi=\langle\sigma\rangle$ is cyclic of order $n$, we find
that $H^{-1}(\pi,M)\simeq \fn{Ker}\varphi/\langle\sigma v-v:v\in
M\rangle$ where $\varphi:M\to M$ is defined by
$\varphi(u)=u+\sigma\cdot u+\cdots+\sigma^{n-1}\cdot u$. On the
other hand, $H^{1}(\pi,M)=\fn{Ker}\varphi/\langle\sigma\cdot
v-v:v\in M\rangle$ by definition. Hence $H^1(\pi,M)\simeq
H^{-1}(\pi,M)$. Similarly, for any subgroup $\pi' \subset \pi$,
$H^1(\pi',M)\simeq H^{-1}(\pi',M)$. Hence the result.

\medskip
Step 3. Let $M$ be a flabby $R\pi$-lattice.
We will show that $M$ is invertible.

Write $n=pq$ where $q$ is a power of $p$.
Define $M'=\{u\in M:\Phi_n(\sigma)\cdot u=0\}$, $M''=M/M'$.
Then we have an exact sequence of $R\pi$-lattices $0\to M'\to M\to M''\to 0$ where $M'$ is a module over $R\pi/\langle\Phi_n(\sigma)\rangle$
and $M''$ is a lattice over $R\pi''$ with $\pi''=\pi/\langle\sigma^q\rangle$.

By Theorem \ref{t3.3}, $R\pi/\langle\Phi_n(\sigma)\rangle \simeq
R[\zeta_n]$ is a Dedekind domain. Thus $[M']^{fl}$ is invertible
by Lemma \ref{l4.1}.

We will show that $M''$ is a flabby $R\pi''$-lattice.
This will be proved in the next step.

Assume the above claim.
By induction on $|\pi|$, we find that $M''$ is invertible.
Thus $[M'']^{fl}$ is invertible also.
Apply Lemma \ref{l2.3}.
We find that $[M]^{fl}=[M']^{fl}+[M'']^{fl}$ is invertible.

Since $[M]^{fl}$ is invertible, we get a flabby resolution of $M$,
$0\to M\to P\to E\to 0$ where $P$ is permutation and $E$ is invertible.
By Step 1, $M$ is coflabby.
Hence the exact sequence $0\to M\to P\to E\to 0$ splits by Lemma \ref{l2.3}.
We get $P\simeq M\oplus E$.
Thus $M$ is invertible.

\medskip
Step 4.
We will show that $M''$ is a flabby $R\pi''$-lattice.

For any subgroup $\pi'$ of $\pi$, we will show that $H^{-1}(\pi',M'')=0$.

If $\pi'=\{1\}$, it is clear that $H^{-1}(\pi',M'')=0$.

Now assume $\pi'\supsetneq \{1\}$. Write
$\pi'=\langle\sigma^d\rangle$ with $d\mid n$ and $d\ne n$. Then
${M'}^{\pi'}:=\{u\in M':\lambda\cdot u=u$ for any $\lambda\in
\pi'\}=0$ because, for any $v\in {M'}^{\pi'}$,
$\Phi_n(\sigma)\cdot v=0$, $(\sigma^d-1)\cdot v=0$, and $M'$ is
torsion-free. From the exact sequence $0=H^{-1}(\pi',M) \to
H^{-1}(\pi',M'') \to \hat{H}^0(\pi',M') =0$, we find
$H^{-1}(\pi',M'')=0$.

Now that $M''$ is flabby as an $R\pi$-lattice, it is flabby as an
$R\pi''$-lattice (where $\pi''=\pi/\langle\sigma^q\rangle$)
because every subgroup of $\pi''$ may be written as
$\pi_1/\langle\sigma^q\rangle$ for some subgroup
$\langle\sigma^q\rangle \subset \pi_1$ and
$H^1(\pi_1/\langle\sigma^q\rangle,M'')\to H^1(\pi_1,M'')$ is
injective by the five-term exact sequence of the
Hochschild-Serre's spectral sequence. Done.

\bigskip
$(1)\Rightarrow (2)$ In general, $[I_{R\pi}^0]^{fl}$ is flabby.
By (1), it is invertible.

\bigskip
$(2)\Rightarrow (3)$ Let $\pi$ be a group of order $n$.
Let $I_{R\pi}$ be the augmentation ideal.
Then we have an exact sequence $0\to I_{R\pi}\to R\pi\to R\to 0$.
Thus $H^1(\pi,I_{R\pi})=R/nR$.

If $[I_{R\pi}^0]^{fl}$ is invertible, then we have an exact
sequence $0\to I_{R\pi}^0\to P\to E\to 0$ where $P$ is permutation
and $E$ is invertible. Taking the dual of each lattice, we get
$0\to E^0\to P^0\to I_{R\pi}\to 0$. Note that $P^0$ is also
permutation and $E^0$ is invertible. Moreover, we have
$0=H^1(\pi,P^0)\to H^1(\pi,I_{R\pi})\to H^2(\pi,E^0)$. Thus there
is an embedding (i.e. an injective map of $R$-modules) $0\to
R/nR\to H^2(\pi,E^0)$.

Write $E^0\oplus E'=Q$ where $Q$ is some permutative $R\pi$-lattice.
It follows that there is also an embedding $0\to R/nR\to H^2(\pi,Q)$.

Write $Q=\bigoplus_i R\pi/\pi_i$ where $\pi_i$'s are subgroups of $\pi$.
Then $H^2(\pi,Q)=\bigoplus_i H^2(\pi,\allowbreak R\pi/\pi_i)\simeq \bigoplus_i H^2(\pi_i,R)$.

Since $p$ is unramified in $R$, choose a prime ideal $P$
containing $pR$. Let $R_P$ be the localization of $R$ at $P$.
Consider $R_P \otimes Q$. In other words, we may assume that $R$
is a DVR with maximal ideal $pR$. We will show that all the Sylow
subgroups of $\pi$ are cyclic. Let $p$ be a prime divisor of $n$
($:=|\pi|$). Write $n=p^tn'$ with $t\ge 1$ and $p\nmid n'$.

 Since $0\to R/nR\to H^2(\pi,Q)\simeq \bigoplus_i
H^2(\pi_i,R)$ and $R/nR\simeq R/p^tR\oplus R/n'R$, it follows that
there is an embedding $0\to R/p^tR\to H^2(\pi_i,R)$ for some $i$,
because $R/p^tR$ is an indecomposable $R$-module.

Thus the proof is finished by the following lemma.
\end{proof}

\begin{lemma}[{\cite[Lemma 4.5]{Sw2}}] \label{l4.2}
Let $\pi$ be a finite group of order $n$, $p$ be a prime divisor
of $n$. Let $R$ be a DVR such that $\fn{char}R=0$ and the maximal
ideal of $R$ is $pR$. Write $n=p^tn'$ where $t\ge 1$ and $p\nmid
n'$. If there is an embedding $0\to R/p^tR\to H^2(\pi,R)$, then
the $p$-Sylow subgroup of $\pi$ is cyclic of order $p^t$.
\end{lemma}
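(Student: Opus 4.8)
The plan is to reduce to a $p$-Sylow subgroup $P$ of $\pi$, compute $H^2(P,R)$ completely, and read off the structure of $P$ from the hypothesised embedding. First I would check that restriction gives an embedding $R/p^tR\hookrightarrow H^2(P,R)$. Indeed, on $H^2(\pi,R)$ one has $\fn{cor}\circ\fn{res}=[\pi:P]=n'$, and $n'$ is prime to $p$, so multiplication by $n'$ is an automorphism of the submodule $R/p^tR\subset H^2(\pi,R)$ (which is killed by $p^t$); hence $\fn{res}$ is injective on this submodule and the composite $R/p^tR\hookrightarrow H^2(\pi,R)\xrightarrow{\fn{res}}H^2(P,R)$ is injective. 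So it suffices to deduce the conclusion from an $R$-module embedding $R/p^tR\hookrightarrow H^2(P,R)$, where now $|P|=p^t$.

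Next I would compute $H^2(P,R)$ via the coefficient sequence $0\to R\to K\to K/R\to 0$, where $K$ is the quotient field of $R$. Since $\fn{char}R=0$, $K$ is a $\bm{Q}$-vector space and $|P|$ is invertible in $K$, so $H^i(P,K)=0$ for $i\ge 1$; the long exact sequence then yields $H^2(P,R)\simeq H^1(P,K/R)$. As $P$ acts trivially, $H^1(P,K/R)=\fn{Hom}(P,K/R)=\fn{Hom}(P^{\mathrm{ab}},K/R)$, the $R$-module structure being inherited from $K/R$.

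I would then analyse $K/R$ as an $R$-module. Because the maximal ideal of $R$ is $pR$, every rational prime $\ell\neq p$ is a unit in $R$, so $K/R$ is a divisible torsion module whose torsion is purely $p$-primary, with $(K/R)[p^b]=p^{-b}R/R\simeq R/p^bR$ as $R$-modules. Writing $P^{\mathrm{ab}}\simeq\bigoplus_{j}\bm{Z}/p^{b_j}\bm{Z}$ with $b_1\ge\cdots\ge b_r\ge 1$, this gives $H^2(P,R)\simeq\bigoplus_{j}R/p^{b_j}R$.

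Finally I would finish by an order count. On one hand $H^2(P,R)$ is annihilated by $|P|=p^t$, so $b_j\le t$ for all $j$. On the other hand the embedding $R/p^tR\hookrightarrow\bigoplus_j R/p^{b_j}R$ exhibits an element whose annihilator is exactly $p^tR$, forcing some $b_j=t$; thus $b_1=t$. Hence $P^{\mathrm{ab}}$ has a cyclic direct factor of order $p^t$, so $p^t\le|P^{\mathrm{ab}}|\le|P|=p^t$, whence $|P^{\mathrm{ab}}|=|P|$, $[P,P]=1$, and $P\simeq P^{\mathrm{ab}}\simeq\bm{Z}/p^t\bm{Z}$ is cyclic of order $p^t$. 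The part I expect to need the most care is not a single deep step but the two bookkeeping identifications: verifying the $R$-module isomorphism $(K/R)[p^b]\simeq R/p^bR$ when the residue field $R/pR$ may be strictly larger than $\bm{F}_p$ (so that the Hom computation and the final order count are literally correct), together with the transfer argument that makes restriction injective on the $p$-primary submodule $R/p^tR$.
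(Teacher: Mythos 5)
Your proof is correct, but it takes a recognizably different route from the paper's. Both arguments are dimension shifts that convert the hypothesized class in $H^2$ into a homomorphism from the group into a torsion $R$-module, and both close with the same elementary observation: because the maximal ideal of $R$ is generated by the rational prime $p$, an element of $R/p^bR$ with $R$-annihilator exactly $p^tR$ has additive order exactly $p^t$. The paper, however, needs no Sylow reduction and no computation of the full cohomology group: it uses the coefficient sequence $0\to R\xrightarrow{p^t} R\to R/p^tR\to 0$ together with $H^1(\pi,R)=0$ to identify the $p^t$-torsion of $H^2(\pi,R)$ with $H^1(\pi,R/p^tR)=\fn{Hom}(\pi,R/p^tR)$, so the hypothesis directly produces $f:\pi\to R/p^tR$ with $\fn{Ann}_R(f)=p^tR$; some value $f(\sigma)$ then has additive order $p^t$, so $\pi$ contains an element of order $p^t$, and since the $p$-Sylow subgroup has order $p^t$ it must be cyclic. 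Your route instead first passes to the Sylow subgroup $P$ via restriction--corestriction, then computes $H^2(P,R)\simeq \fn{Hom}(P^{\mathrm{ab}},K/R)\simeq\bigoplus_j R/p^{b_j}R$ from the sequence $0\to R\to K\to K/R\to 0$, and finishes with the structure theorem for finite abelian $p$-groups. What your approach buys is a complete description of $H^2(P,R)$ for a $p$-group $P$ (which also makes the converse transparent: a cyclic Sylow subgroup of order $p^t$ really does yield a copy of $R/p^tR$ in $H^2$); what it costs is the extra machinery --- the transfer argument and the abelianization bookkeeping --- which the paper's choice of coefficient sequence avoids entirely. The two points you flagged as delicate are indeed sound: $(K/R)[p^b]=p^{-b}R/R\simeq R/p^bR$ as $R$-modules regardless of the residue field, and multiplication by $n'$ is an automorphism of any module killed by $p^t$ since $\gcd(n',p)=1$, so restriction is injective on your copy of $R/p^tR$.
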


\begin{proof}
From the exact sequence $0\to R \xrightarrow{p^t} R \to R/p^tR\to
0$ of $R\pi$-modules, we get $0=H^1(\pi,R)\to H^1(\pi,R/p^tR)\to
H^2(\pi,R) \xrightarrow{p^t} H^2(\pi,R)$, it follows that there is
an embedding $0\to R/p^tR\to H^1(\pi,R/p^tR)$. Since
$H^1(\pi,R/p^tR)\simeq \fn{Hom}(\pi,R/p^tR)$, there is a group
homomorphism $f:\pi\to R/p^tR$ such that the annihilator
$\fn{Ann}_R (f)=p^tR$ (here $\fn{Hom}(\pi,R/p^tR)$ is regarded as
an $R$-module). Hence $\pi$ contains an element of order $p^t$.
\end{proof}

\bigskip
Prof. Shizuo Endo kindly communicated with the following example
which showed that the assumption of unramifiedness in Theorem
\ref{t1.4} is crucial.

\begin{example} \label{ex4.3}
Let $p$ be an odd prime number and $R=\bm{Z}[\zeta_p]$. Write
$\zeta=\zeta_p$. Let $\pi= \langle \sigma \rangle \simeq C_p$ be
the cyclic group of order $p$. Then $p$ is ramified in $R$; in
fact, $pR= (1- \zeta)^{p-1}$.

Let $M = R \cdot u$ be the cyclic $R \pi$-lattice defined by
$\sigma \cdot u = \zeta u$. Taking a flabby resolution of $M^0$
and then taking the dual, we obtain an exact sequence of $R
\pi$-lattices $0 \to E \to P \to M \to 0$ where $P$ is a
permutation lattice and $E$ is a coflabby lattice (and also a
flabby lattice by the periodicity of cohomolgy groups). We will
show that $E$ is not an invertible lattice. This shows that
Theorem \ref{t1.4} fails for $R\pi$.

It is easy to show that $H^{-1}(\pi, M)= R/(1-\zeta)$, and
$\hat{H}^0(\pi, Q)=(R/(1-\zeta)^{p-1})^{(n)}$ if $Q=R^{(n)} \oplus
(R\pi)^{(n')}$ is a permutation lattice.

Suppose that $E$ is an invertible lattice, then $\hat{H}^0(\pi,
E)$ is a direct summand of $(R/(1-\zeta)^{p-1})^{(m)}$ for some
integer $m$. Since the module $(R/(1-\zeta)^{p-1})^{(m)}$
satisfies the ascending chain condition and the descending chain
condition, the Krull-Schmidt-Azumaya Theorem may be applied to it
\cite[page 128, Theorem 6.12]{CR}. Hence $\hat{H}^0(\pi, E) \simeq
(R/(1-\zeta)^{p-1})^{(m')}$ for some integer $m'$.

From the exact sequence $0 \to E \to P \to M \to 0$, we get an
exact sequence of $R$-modules $0 \to H^{-1}(\pi, M) \to
\hat{H}^0(\pi, E) \to \hat{H}^0(\pi, P) \to 0$, i.e. an exact
sequence $0 \to R/(1-\zeta) \to (R/(1-\zeta)^{p-1})^{(m')} \to
(R/(1-\zeta)^{p-1})^{(n)} \to 0$. Counting the lengths of these
modules, we find a contradiction.

\medskip
For the case $p=2$, let $R=\bm{Z}[\sqrt{-1}]$ and $\pi= \langle
\sigma \rangle \simeq C_2$ be the cyclic group of order $2$. Let
$M = R \cdot u$ be the cyclic $R \pi$-lattice defined by $\sigma
\cdot u = - u$. We can find a flabby $R\pi$-lattice $E$ which is
not invertible as before.

\medskip
More generally, let $\pi= \langle \sigma \rangle \simeq C_n$ be
the cyclic group of order $n$ and $p$ be a prime divisor of $n$.
Suppose that $R$ is a Dedekind domain such that {\rm (i)}
$\fn{char}R=0$, {\rm (ii)} every prime divisor of $n$ is not
invertible in $R$, and {\rm (iii)} $\zeta_p \in R$ if $p$ is odd
(resp. $\sqrt{-1} \in R$ if $p=2$). Define $\pi^"$ to be the
quotient group of $\pi$ with $| \pi" | =p$. Find a flabby, but not
invertible $R\pi"$-lattice $E$ as above. As an $R\pi$-lattice, $E$
is flabby and is not invertible (see, for example, \cite[page 180,
Lemma 2]{CTS}).

\end{example}

\bigskip
For the convenience of the reader, we reproduce a proof of Theorem \ref{t1.7}.

\begin{proof}[Proof of Theorem \ref{t1.7}] ~ \par
Let $M$ be an invertible $R\pi$-lattice.
We will show that $M$ is permutation.

Write $M\oplus M'=P$ for some permutation $R\pi$-lattice and some $M'$.

Denote by $\widehat{R}$ the completion of $R$ at its maximal ideal.
In the category of $\widehat{R}\pi$-lattices,
$\widehat{R}\pi/\pi'$ is indecomposable for any subgroup $\pi'$ of $\pi$ by \cite[page 678, Theorem 32.14]{CR}
(note that the assumptions of \cite[Theorem 32.11]{CR} are satisfied).

Since the Krull-Schmidt-Azumaya Theorem is valid in the category
of $\widehat{R}\pi$-lattices \cite[page 128, Theorem 6.12]{CR},
from $\widehat{R}M\oplus \widehat{R}M'=\widehat{R}P$, we find a
permutation $R\pi$-lattice $Q$ such that $\widehat{R}M\simeq
\widehat{R}Q$. By \cite[page 627, Proposition (30.17)]{CR} we find
that $M\simeq Q$.
\end{proof}

\section{The flabby class group}

Let $R$ be a Dedekind domain. Recall that the class group of $R$,
denoted by $C(R)$, is defined as $C(R)=I(R)/P(R)$ where $I(R)$ is
the group of fractional ideals of $R$ and $P(R)$ is the group of
principal ideals of $R$. If $J$ is a fractional ideal of $R$,
$[J]$ denotes the image of $J$ in $C(R)$. The group operation in
$C(R)$ is written multiplicatively.

Let $R$ be a Dedekind domain and $M$ be a finitely generated torsion-free $R$-module.
Then $M\simeq R^{(m-1)}\oplus I$ where $I$ is a non-zero ideal of $R$.
Define the Steinitz class of $M$, denoted by $\fn{cl}(M)$, by $\fn{cl}(M)=[I]$ (see \cite[page 85]{CR}).
If $M_1$ and $M_2$ are finitely generated torsion-free $R$-modules,
it is not difficult to verify that $\fn{cl}(M\oplus N)=\fn{cl}(M)\cdot \fn{cl}(N)$.

\begin{defn} \label{d5.1}
Let $R$ be a Dedekind domain, $M$ be a finitely generated
$R$-module. Define $M_0=M/\{\mbox{torsion elements in }M\}$.
\end{defn}

\begin{theorem} \label{t5.2}
Let $\pi=\langle\sigma\rangle$ be a cyclic group of order $n$ and
$R$ be a Dedekind domain satisfying that {\rm (i)} $\fn{char}R=0$,
{\rm (ii)} every prime divisor of $n$ is not invertible in $R$,
and {\rm (iii)} $p$ is unramified in $R$ for any prime divisor $p$
of $n$. Define a map $c_\theta:F_{R\pi}\to \bigoplus_{d\mid n}
C(R\pi/\langle\Phi_n(\sigma)\rangle)$ by
$c_\theta([M])=(\ldots,\fn{cl}((M/\Phi_d(\sigma)M)_0),\ldots)$
where $M$ is a flabby $R\pi$-lattice. Then $c_\theta$ is an
isomorphism.
\end{theorem}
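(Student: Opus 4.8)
The plan is to treat $c_\theta$ as a homomorphism of abelian groups and to establish well-definedness, surjectivity, and injectivity in turn. First I would record two structural facts. By Theorem \ref{t3.3} each $R\pi/\langle\Phi_d(\sigma)\rangle$ is the Dedekind domain $R[\zeta_d]$, and if $M$ is an $R\pi$-lattice then $(M/\Phi_d(\sigma)M)_0$ is a finitely generated torsion-free $R[\zeta_d]$-module (torsion-freeness over $R[\zeta_d]$ following from that over $R$ by a norm argument), so its Steinitz class is defined; I will call this module the $d$-th layer of $M$. Moreover, since $\pi$ is cyclic all its Sylow subgroups are cyclic, so by Theorem \ref{t1.4} every flabby $R\pi$-lattice is invertible; hence $F_{R\pi}$ is in fact a group (if $M\oplus M'$ is permutation then $[M]+[M']=0$), which is consistent with the target being a group and lets me aim for an isomorphism of groups.

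Next I would check that $c_\theta$ is a well-defined homomorphism. Additivity is immediate, because the functor $M\mapsto(M/\Phi_d(\sigma)M)_0$ commutes with finite direct sums and the Steinitz class is multiplicative over direct sums. To see that $c_\theta$ kills permutation lattices it suffices to compute the layers of a single $R[\pi/\pi']$ with $\pi'=\langle\sigma^k\rangle$, $k\mid n$. Writing $R[\pi/\pi']\simeq R[X]/\langle X^k-1\rangle$ one gets $R[\pi/\pi']/\Phi_d(\sigma)R[\pi/\pi']\simeq R[X]/\langle X^k-1,\Phi_d(X)\rangle$, which is $R[\zeta_d]$ when $d\mid k$ (as $\Phi_d\mid X^k-1$) and is $R$-torsion when $d\nmid k$ (as $\Phi_d$ and $X^k-1$ are coprime over $K$). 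In both cases the layer is free or zero, so its class is trivial; hence every permutation lattice maps to $0$ and $c_\theta$ descends to $F_{R\pi}$.

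For surjectivity I would build an explicit section. Given $d_0\mid n$ and a class $c\in C(R[\zeta_{d_0}])$, choose an integral ideal $J\subseteq R[\zeta_{d_0}]$ with $[J]=c$ and regard $J$ as an $R\pi$-lattice through $R\pi\twoheadrightarrow R[\zeta_{d_0}]$. Its $d_0$-th layer is $J$ (class $c$), while for $d\ne d_0$ the element $\Phi_d(\zeta_{d_0})$ is a nonzero element of the domain $R[\zeta_{d_0}]$, so the $d$-th layer of $J$ vanishes. I would then compute $c_\theta([J]^{fl})$ by applying the layer functor to a flabby resolution $0\to J\to P\to E\to 0$ and using that the Steinitz class is additive on short exact sequences of torsion-free modules over the Dedekind domain $R[\zeta_d]$, together with Lemma \ref{l2.3}(1); since the permutation term $P$ has trivial layers, the layers of $E$ match those of $J$, so $[J]^{fl}$ realizes the coordinate vector supported at $d_0$. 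Summing such classes exhibits a right inverse to $c_\theta$, proving surjectivity.

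Injectivity is where I expect the real difficulty. Starting from a flabby (hence invertible) $M$ with $c_\theta([M])=0$, I would peel off the top cyclotomic factor: set $M'=\fn{Ker}\{\Phi_n(\sigma):M\to M\}$ and $M''=M/M'$, as in Steps 3--4 of the proof of Theorem \ref{t1.4}, so that $M'$ is projective over $R[\zeta_n]$, $M''$ is a flabby $R\pi$-lattice annihilated by $g(\sigma)$ with $g(X)=(X^n-1)/\Phi_n(X)$, and the layers of $M''$ agree with those of $M$ for $d<n$ while triviality of the $n$-th layer forces $M'\simeq R[\zeta_n]^{(r)}$ to be free. Applying Lemma \ref{l2.3}(1) to $0\to M'\to M\to M''\to 0$ gives $[M]=r\,[R\pi/\langle\Phi_n(\sigma)\rangle]^{fl}+[M'']$, and one wants to conclude by induction that both terms vanish. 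The main obstacle is precisely the control of $[R\pi/\langle\Phi_n(\sigma)\rangle]^{fl}$ and the recursion on $M''$ when $n$ is not a prime power: then $g(X)=\prod_{d\mid n,\,d<n}\Phi_d(X)$ is not of the form $X^m-1$, so $R\pi/\langle g(\sigma)\rangle$ is not a group ring and the clean flabby resolution used in Lemma \ref{l4.1} is unavailable; relatedly, $R[\zeta_n]$ is not a direct summand of $R\pi$ at the primes dividing $n$. Overcoming this --- showing the top layer contributes nothing to $F_{R\pi}$ and organizing the induction over the divisor lattice so that $M''$ is handled as a flabby lattice supported on the lower layers --- is the technical heart of the argument, and it is exactly here that Theorem \ref{t3.3} and the unramifiedness hypothesis (iii) are indispensable.
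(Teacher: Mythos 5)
Your proposal has a genuine gap in each half. For surjectivity, the step that fails is the computation of $c_\theta([J]^{fl})$ from a flabby resolution $0\to J\to P\to E\to 0$. The layer functor $M\mapsto (M/\Phi_d(\sigma)M)_0$ is not exact: tensoring the resolution with $R\pi/\langle\Phi_d(\sigma)\rangle$ produces a sequence $T\to J/\Phi_d(\sigma) J\to P/\Phi_d(\sigma) P\to E/\Phi_d(\sigma) E\to 0$ with a nontrivial Tor term $T$, and after passing to torsion-free quotients one only obtains a complex that is exact \emph{up to torsion}. Over a Dedekind domain torsion subquotients shift Steinitz classes by the classes of their order ideals (already $0\to\mathfrak{p}\to D\to D/\mathfrak{p}\to 0$ shows that ignoring the torsion quotient would force $[\mathfrak{p}]$ to be trivial), so ``Steinitz classes are additive on short exact sequences of torsion-free modules'' is not applicable, and the layers of $E$ cannot be read off from those of $J$ and $P$ without controlling these correction terms. (Even granting exactness, the layers of $E$ would be the \emph{inverses} of those of $J$, not equal --- harmless for surjectivity, but a sign the bookkeeping is off.) Worse, your $J$ is neither flabby nor invertible as an $R\pi$-lattice (e.g.\ for $\pi=C_p$, $H^{-1}(\pi,R[\zeta_p])\simeq R[\zeta_p]/(1-\zeta_p)\neq 0$), so Theorem~\ref{t5.3} --- the one tool in the paper that relates $[\,\cdot\,]^{fl}$ to the layers --- is not available for $J$. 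The paper sidesteps all of this: it uses projective ideals $\mathcal{A}\subseteq R\pi$, which are invertible lattices and hence already flabby (no resolution needed), shows that $[\mathcal{A}]-[R\pi]\mapsto c_\theta([\mathcal{A}])$ is just the natural map from the locally free class group $C(R\pi)$ to $C(\Omega_{R\pi})\simeq\bigoplus_{d\mid n}C(R\pi/\langle\Phi_d(\sigma)\rangle)$ for the maximal order $\Omega_{R\pi}=\prod_{d\mid n}R\pi/\langle\Phi_d(\sigma)\rangle$, and then quotes Rim's theorem~\cite{Ri} for the surjectivity of that map.

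For injectivity, you correctly identify the obstruction (the top layer $[R\pi/\langle\Phi_n(\sigma)\rangle]^{fl}$ and the recursion when $n$ is not a prime power, where $X^n-1\neq (X^m-1)\Phi_n(X)$ and the resolution of Lemma~\ref{l4.1} is unavailable), but you do not overcome it, so this half remains a plan rather than a proof. There is also a conflation: you identify $M'=\fn{Ker}(\Phi_n(\sigma))$ with the $n$-th layer $(M/\Phi_n(\sigma)M)_0$; these are respectively a sublattice and a quotient lattice of the $\Phi_n$-isotypic component, and the equality of their Steinitz classes is itself something to prove. The paper's injectivity argument avoids your induction entirely: by Theorem~\ref{t5.3}, $[M]^{fl}=\sum_{d\mid n}[(M/\Phi_d(\sigma)M)_0]^{fl}$ for the invertible lattice $M$; if all layers are free, each summand vanishes (since $[R\pi/\langle\Phi_d(\sigma)\rangle]^{fl}=0$), so $[M]^{fl}=0$, which yields a resolution $0\to M\to P_1\to P_2\to 0$ with both $P_i$ permutation; this splits by Lemma~\ref{l2.3}(3) because $M$ is invertible (Theorem~\ref{t1.4}) and $P_2$ is flabby, whence $[M]=0$. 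In short, both halves of your argument require precisely the two pieces of machinery --- Swan's layer formula (Theorem~\ref{t5.3}) and Rim's surjectivity through $C(R\pi)$ --- on which the paper's proof is built, and without them the key steps do not go through.
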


\begin{remark}
By Theorem \ref{t1.4}, $F_{R\pi}$ is a group.
By Theorem \ref{t3.3}, $R\pi/\langle\Phi_d(\sigma)\rangle \simeq R[X]/\allowbreak \langle\Phi_d(X)\rangle\simeq R[\zeta_d]$ is a Dedekind domain;
thus $C(R\pi/\langle\Phi_d(\sigma)\rangle)$ is well-defined.
\end{remark}

The proof of Theorem \ref{t5.2} follows by the same way as in
\cite[Sections 5 and 6]{Sw2}. Before giving the proof of it, we
recall a key lemma in \cite{Sw2}.

\begin{theorem} \label{t5.3}
Let $\pi$ and $R$ be the same as in Theorem \ref{t5.2}.
If $M$ is an invertible $R\pi$-lattice, then
\begin{align*}
[M]^{fl} &= \sum_{d\mid n}[(M/\Phi_d(\sigma)M)_0]^{fl}, \\
[(M/\Phi_n(\sigma)M)_0]^{fl} &= \sum_{d\mid n} \mu\left(\frac{n}{d}\right)[M/(\sigma^d-1)M]^{fl}.
\end{align*}
\end{theorem}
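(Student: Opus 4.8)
The plan is to reduce both identities to the single divisor-sum formula
\[
[M]^{fl}=\sum_{d\mid n}\,[(M/\Phi_d(\sigma)M)_0]^{fl}\qquad(\ast)
\]
for an invertible lattice over an \emph{arbitrary} cyclic group, and then to recover the second identity from $(\ast)$ by M\"obius inversion in the abelian group $F_{R\pi}$ (which is a group by Theorem \ref{t1.4}). The key preliminary observation is that for invertible $M$ and any $e\mid n$ the quotient $M/(\sigma^e-1)M$ is again invertible over $\pi/\langle\sigma^e\rangle\simeq C_e$: writing $M\oplus M'=P$ with $P$ permutation, it is a direct summand of the permutation $C_e$-lattice $P/(\sigma^e-1)P$, and is in particular $R$-torsion free, so every symbol in the statement is defined. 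Since $\Phi_d(X)\mid X^e-1$ for $d\mid e$, one has $(\sigma^e-1)M\subseteq\Phi_d(\sigma)M$, whence $((M/(\sigma^e-1)M)/\Phi_d(\sigma)(\cdots))_0=(M/\Phi_d(\sigma)M)_0$. Writing $b_d=[(M/\Phi_d(\sigma)M)_0]^{fl}$ and $a_e=[M/(\sigma^e-1)M]^{fl}$, formula $(\ast)$ applied to the $C_e$-lattice $M/(\sigma^e-1)M$ thus reads $a_e=\sum_{d\mid e}b_d$. The first displayed identity is the case $e=n$ (as $\sigma^n-1$ annihilates $M$), and M\"obius inversion applied to the family $\{a_e=\sum_{d\mid e}b_d\}_{e\mid n}$ yields $b_n=\sum_{d\mid n}\mu(n/d)a_d$, the second identity. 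Hence everything reduces to proving $(\ast)$ for every cyclic group.

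I would establish $(\ast)$ for every cyclic group and every Dedekind domain satisfying (i)--(iii), by induction on the order $n$; the case $n=1$ is trivial. For the inductive step fix a prime $p\mid n$, write $n=p^a m$ with $p\nmid m$, and put $q=n/p$. The factorization $X^n-1=(X^q-1)\,g(X)$, with $g(X)=(X^n-1)/(X^q-1)=\prod_{e\mid m}\Phi_{p^a e}(X)$, gives an exact sequence
\[
0\to W\to M\to \overline M\to 0,\qquad W=(\sigma^q-1)M,\quad \overline M=M/(\sigma^q-1)M.
\]
Now $\overline M$ is an invertible $C_q$-lattice, so Lemma \ref{l2.3}(1) gives $[M]^{fl}=[W]^{fl}+[\overline M]^{fl}$; and by the inductive hypothesis applied to $\overline M$ (using the no-twist identity of the previous paragraph with $e=q$), $[\overline M]^{fl}=\sum_{d\mid q}b_d$, the sum over the divisors of $n$ whose $p$-part is non-maximal. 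It remains to prove $[W]^{fl}=\sum_{e\mid m}b_{p^a e}$, the sum over the remaining divisors.

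This last step is the heart of the matter, and where I expect the real work. The lattice $W$ is annihilated by $g(\sigma)$, hence is a module over $A:=R[X]/\langle g(X)\rangle$. Using $R\pi\simeq RC_{p^a}\otimes_R RC_m$ (from $C_n\simeq C_{p^a}\times C_m$) and dividing the first factor by $\Phi_{p^a}$, one identifies $A\simeq (RC_{p^a}/\langle\Phi_{p^a}\rangle)\otimes_R RC_m\simeq R'C_m$, where $R'=R[\zeta_{p^a}]\simeq R[X]/\langle\Phi_{p^a}(X)\rangle$ is a Dedekind domain by Theorem \ref{t3.3}. One checks that $R'$ again satisfies hypotheses (i)--(iii) for the group $C_m$ — the primes dividing $m$ are prime to $p$, and since $\fn{disc}(\Phi_{p^a})$ is a power of $p$ their unramifiedness is inherited from $R$ — so that $(\ast)$ is available for the strictly smaller group $C_m$ over the base $R'$. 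Applying it to $W$ gives $[W]^{fl}=\sum_{e\mid m}[(W/\Phi_e W)_0]^{fl}$, and under the identification $R'[\zeta_e]=R[\zeta_{p^a e}]$ each summand equals $b_{p^a e}$: indeed $(W/\Phi_{p^a e}(\sigma)W)_0$ is the scalar multiple $(\zeta_{p^a e}^{\,q}-1)\cdot(M/\Phi_{p^a e}(\sigma)M)_0$, and multiplication by the ring element $\sigma^q-1$ is an $R[\zeta_{p^a e}]$-module isomorphism onto its image, so the two lattices are isomorphic and have equal flabby class. This closes the induction.

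The main obstacle is precisely this base-change: setting up the ring isomorphism $A\simeq R'C_m$ compatibly with the cyclotomic quotients, confirming that $R'$ still meets the unramifiedness hypothesis (which is exactly what Theorem \ref{t3.3} is designed to guarantee), and verifying that $W$ is genuinely an invertible $R'C_m$-lattice so that $(\ast)$ applies to it — the last via Lemma \ref{l2.3}(2), once $W$ is seen to be invertible over each Sylow subgroup of $C_m$. Over $R=\bm{Z}$ this bookkeeping is carried out in \cite[Sections 5 and 6]{Sw2}, and the point of Theorem \ref{t3.3} is to let the same argument run over an arbitrary ``nice'' Dedekind domain.
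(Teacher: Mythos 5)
Your reduction of both identities to the single formula $(\ast)$, and the recovery of the second identity by M\"obius inversion in the abelian group $F_{R\pi}$, is correct and is exactly how Swan deduces his Corollary 5.2 from his Theorem 5.1 in \cite[Section 5]{Sw2} --- which is all the paper itself does here (its ``proof'' is the assertion that Swan's arguments carry over once Theorem \ref{t3.3} is in hand). Your preliminary observations are also correct: $M/(\sigma^e-1)M$ is invertible over $C_e$, $(\overline M/\Phi_d(\sigma)\overline M)_0=(M/\Phi_d(\sigma)M)_0$ for $d\mid e$, the splitting $[M]^{fl}=[W]^{fl}+[\overline M]^{fl}$ via Lemma \ref{l2.3}(1) is legitimate (here $C_q$ is an honest quotient group of $\pi$, so permutation and flabby $RC_q$-lattices inflate to permutation and flabby $R\pi$-lattices, and the inflation transfer $F_{RC_q}\to F_{R\pi}$ is harmless), and $(W/\Phi_{p^ae}(\sigma)W)_0\simeq(M/\Phi_{p^ae}(\sigma)M)_0$. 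In particular your induction closes completely when $n=p^a$ is a prime power, since then $m=1$ and the problematic step below is vacuous.

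For $m>1$, however, there is a genuine gap at the step you yourself call the heart of the matter. The inductive hypothesis applied to $W$ over the base $R'$ yields an identity among classes in $F_{R'C_m}$, i.e.\ classes defined by flabby resolutions by \emph{permutation $R'C_m$-lattices}; but you then combine it with $[M]^{fl}=[W]^{fl}+[\overline M]^{fl}$, whose terms live in $F_{R\pi}$. These monoids are not the same, and restriction of scalars along the ring surjection $R\pi\twoheadrightarrow R\pi/\langle g(\sigma)\rangle\simeq R'C_m$ gives no map between them compatible with $[\,\cdot\,]^{fl}$: it preserves neither ``permutation'' nor ``flabby''. Indeed the free rank-one $R'C_m$-lattice restricts to $R\pi/\langle g(\sigma)\rangle$, which is not a permutation $R\pi$-lattice (rationally it has no trivial summand), and is not even flabby --- already for $\pi=C_p$ one computes $\hat{H}^{-1}(\pi,R[\zeta_p])\simeq R[\zeta_p]/(\zeta_p-1)R[\zeta_p]\simeq R/pR\neq0$. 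Hence a flabby resolution over $R'C_m$ does not restrict to one over $R\pi$, and $[W]^{fl}$ computed over the two rings are a priori unrelated objects; the clause ``under the identification $R'[\zeta_e]=R[\zeta_{p^ae}]$ each summand equals $b_{p^ae}$'' identifies the lattices $(W/\Phi_{p^ae}W)_0$, but not their classes across the two different flabby class monoids. Nor can you fall back on Lemma \ref{l2.3}(1) to run the filtration of $W$ by cyclotomic pieces inside $F_{R\pi}$: the quotients $(W/\Phi_{p^ae}W)_0$ are generally not invertible $R\pi$-lattices (same counterexample $R[\zeta_p]$ over $RC_p$), so the additivity lemma does not apply. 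Bridging precisely this comparison is the nontrivial content of Swan's proof of Theorem 5.1 that the paper invokes, and your proposal does not supply it.
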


\begin{proof}
The proof of Theorem 5.1 and Corollary 5.2 in \cite[Section 5]{Sw2} works as well in the present situation.
The details are omitted.
\end{proof}

\bigskip
\begin{proof}[Proof of Theorem \ref{t5.2}] ~ \par
First of all, we will show that $c_\theta$ is injective. Let $M$
be a flabby $R\pi$-lattice. It is invertible by Theorem
\ref{t1.4}. If $c_\theta([M])=0$ in $\bigoplus_{d\mid
n}C(R\pi/\langle\Phi_d(\sigma)\rangle)$, then
$(M/\Phi_d(\sigma)M)_0$ is a free module over $R\pi/\langle
\Phi_d(\sigma)\rangle$ for all $d\mid n$. Since $[R\pi/\langle
\Phi_d(\sigma)\rangle]^{fl}=0$ (by applying Theorem \ref{t5.3}
with $M=R\pi$), we find $[(M/\Phi_d(\sigma)M)_0]^{fl}=0$. By
Theorem \ref{t5.3}, we find $[M]^{fl}=0$. Thus we have a flabby
resolution of $M$, $0\to M\to P_1\to P_2\to 0$ where $P_1$ and
$P_2$ are permutation $R\pi$-lattices. Since $M$ is invertible, we
may apply Lemma \ref{l2.3} to conclude that $P_1\simeq M\oplus
P_2$. Thus $M\sim P_1$ and $[M]=0$ in $F_{R\pi}$.

It remains to show that $c_\theta$ is surjective.
We also follow the proof of \cite[page 247--248]{Sw2}.

\medskip
Step 1. Let $K_0(R\pi)$ be the Grothendieck group of the category
of finitely generated projective $R\pi$-module. Every such
projective module is isomorphic to a direct sum of a free module
and a projective ideal $\c{A}$ \cite[Theorem A]{Sw3}. Define
$C(R\pi)$ as a subgroup of $K_0(R\pi)$ by
$C(R\pi)=\{[\c{A}]-[R\pi]\in K_0(R\pi):\c{A}$ is a projective
ideal over $R\pi\}$. The group $C(R\pi)$ is called the locally
free class group of $R\pi$ \cite[page 659; EM, page 86]{CR}.

\medskip
Step 2.
Define a map $c':C(R\pi)\to F_{R\pi}$ by $c'([\c{A}]-[R\pi])=[\c{A}]\in F_{R\pi}$.
Since $\c{A}$ is a projective ideal over $R\pi$,
it is an invertible $R\pi$-lattice; thus $c'$ is well-defined.

We will show that the composition map $c_\theta \circ c': %
C(R\pi)\to F_{R\pi}\to \bigoplus_{d\mid n} C(R\pi/\allowbreak \langle\Phi_d(\sigma)\rangle)$
is surjective in the next step.
Once it is proved, $c_\theta$ is also surjective.

\medskip
Step 3.
We will show that $c_\theta\circ c':C(R\pi)\to F_{R\pi}\to \bigoplus_{d\mid n} C(R\pi/\langle\Phi_d(\sigma)\rangle)$ is surjective.

Let $K$ be the quotient field of $R$. Write
$\Omega_{R\pi}:=\prod_{d\mid n} R\pi/\langle
\Phi_d(\sigma)\rangle$. It is not difficult to verify that
$\Omega_{R\pi}$ is the maximal $R$-order in $K\pi$ containing
$R\pi$ \cite[page 559 and page 563]{CR}. We may define the locally
free class group $C(\Omega_{R\pi})$ as in the case $C(R\pi)$ (see
\cite[page 659]{CR}). It follows that $C(\Omega_{R\pi})\simeq
\bigoplus_{d\mid n} C(R\pi/\langle \Phi_d(\sigma)\rangle)$.

The composite map $c_\theta\circ c'$ turns out to be $c_\theta\circ c'([\c{A}]-[R\pi])= %
[\Omega_{R\pi}\otimes_{R\pi} \c{A}]-[\Omega_{R\pi}]\allowbreak \in
C(\Omega_{R\pi})$, which is just the natural map $C(R\pi)\to
C(\Omega_{R\pi})$ (the map induced by the inclusion map $R\pi \to
\Omega_{R\pi}$). Thus the surjectivity of $c_\theta\circ c'$ is
equivalent to the surjectivity of the map $C(R\pi)\to
C(\Omega_{R\pi})$. However, the map $C(R\pi)\to C(\Omega_{R\pi})$
is surjective by \cite[Corollary 11]{Ri}; in applying Rim's
Theorem, we should verify the fact that $R\pi$ has no nilpotent
ideal, which may be see from the embedding $R\pi\hookrightarrow
K\pi \simeq \prod_{d\mid n} K(\zeta_d)$ and hence $R\pi$ has no
nilpotent element. This finishes the proof that $c_\theta\circ c'$
is surjective.

Alternatively, the reader may show that $C(R\pi)\to C(\Omega_{R\pi})$ is surjective by modifying the proof of \cite[Lemma 6.1]{Sw2}.
\end{proof}

Now we give a partial generalization of Theorem \ref{t1.5}.

\begin{theorem} \label{t5.4}
Let $\pi$ be a cyclic group of order $n$, $R$ be a semilocal
Dedekind domain satisfying {\rm (i)} $\fn{char}R=0$, {\rm (ii)}
every prime divisor of $n$ is not invertible in $R$, and {\rm
(iii)} $p$ is unramified in $R$ for every prime divisor $p$ of
$n$. Then $F_{R\pi}=\{0\}$ and all the flabby $R\pi$-lattices are
stably permutation, i.e. if $M$ is a flabby $R\pi$-lattice, there
are permutation $R\pi$-lattices $P_1$ and $P_2$ such that $M\oplus
P_1 \simeq P_2$.
\end{theorem}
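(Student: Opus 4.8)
The plan is to read off $F_{R\pi}=\{0\}$ from the explicit isomorphism of Theorem \ref{t5.2}, and then to derive the stably-permutation statement directly from the definition of the equivalence relation in Definition \ref{d2.1}. Since $\pi$ is cyclic of order $n$ and $R$ satisfies hypotheses (i)--(iii), Theorem \ref{t5.2} gives an isomorphism
\[
c_\theta\colon F_{R\pi}\ \xrightarrow{\ \sim\ }\ \bigoplus_{d\mid n} C\bigl(R\pi/\langle\Phi_d(\sigma)\rangle\bigr),
\]
and by Theorem \ref{t3.3} each summand is the ideal class group of the Dedekind domain $R\pi/\langle\Phi_d(\sigma)\rangle\simeq R[\zeta_d]$. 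So the whole problem reduces to showing that $C(R[\zeta_d])=0$ for every divisor $d$ of $n$.

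The one genuine point to establish is that each $R[\zeta_d]$ is again semilocal. I would argue as follows: $R[\zeta_d]=R[X]/\langle\Phi_d(X)\rangle$ is a free $R$-module of rank $\deg\Phi_d$, hence integral over $R$, so every maximal ideal of $R[\zeta_d]$ contracts to a (nonzero, hence maximal) ideal of $R$; since $R$ is semilocal and only finitely many primes of $R[\zeta_d]$ sit above each maximal ideal $\mathfrak{m}$ of $R$ (the fibre $R[\zeta_d]\otimes_R R/\mathfrak{m}$ being a finite-dimensional algebra over the field $R/\mathfrak{m}$), the ring $R[\zeta_d]$ has only finitely many maximal ideals. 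Thus $R[\zeta_d]$ is a semilocal Dedekind domain, and a semilocal Dedekind domain is a principal ideal domain (e.g.\ by the approximation theorem one produces, for each maximal ideal, an element realizing the prescribed valuations). Hence $C(R[\zeta_d])=0$ for all $d\mid n$, and the displayed isomorphism forces $F_{R\pi}=\{0\}$.

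Finally, the stably-permutation conclusion is immediate: if $M$ is a flabby $R\pi$-lattice then $[M]=0$ in $F_{R\pi}$, which by Definition \ref{d2.1} means $M\sim P$ for some permutation lattice $P$, i.e.\ $M\oplus P_1\simeq P\oplus P_2$ for permutation lattices $P_1,P_2$; since $P\oplus P_2$ is again permutation this is exactly the desired relation $M\oplus P_1\simeq P_2'$. The only step carrying real content is the semilocality of $R[\zeta_d]$ feeding the class-group vanishing; the reduction itself (Theorem \ref{t5.2}) and the fact that semilocal Dedekind domains are PIDs are standard, so I anticipate no serious obstacle beyond that observation.
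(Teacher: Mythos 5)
Your proposal is correct and follows essentially the same route as the paper: apply Theorem \ref{t5.2}, reduce to showing $C(R[\zeta_d])=0$ via semilocality of $R[\zeta_d]$ (a semilocal Dedekind domain being a PID), and then read off the stably-permutation statement from Definition \ref{d2.1}. The only difference is that you spell out the finiteness-of-fibres argument for why $R[\zeta_d]$ inherits semilocality, which the paper simply asserts from integrality over the semilocal ring $R$.
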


\begin{proof}
Apply Theorem \ref{t5.2}.
It suffices to show that $C(R\pi/\langle\Phi_d(\sigma)\rangle)=0$ where $\pi=\langle \sigma \rangle$ is of order $n$ and $d\mid n$.
Note that $R\pi/\langle \Phi_d(\sigma)\rangle \simeq R[\zeta_d]$ is a Dedekind domain integral over $R$.
Since $R$ is semilocal, $R[\zeta_d]$ is also semilocal.
Thus $R[\zeta_d]$ is a principal ideal domain and $C(R[\zeta_d])=0$.
Thus $F_{R\pi}=\{0\}$.

If $M$ is a flabby $R\pi$-lattice, from $[M]\in F_{R\pi}=\{0\}$,
we find that $[M]=0$, i.e.\ $M\sim 0$ which is equivalent to that
$M$ is stably permutation.
\end{proof}

\newpage

\end{document}